\definecolor{shadecolor}{gray}{0.85}
\newcommand{\arraystrech}[5]
\newtheorem{Lemma}{Lemma}
\newtheorem{Theorem}{Therorem}
\newtheorem{rem}{Remark}
\DeclareSymbolFont{largesymbol}{OMX}{yhex}{m}{n}
\DeclareMathAccent{\widehat}{\mathord}{largesymbol}{"62}
\begin{document}
\pagestyle{plain}

\title{A Locking-free DP-Q2-P1 MFEM for \\ Incompressible
Nonlinear Elasticity Problems\thanks{The research was supported by the NSFC
projects 11171008 and 11571022.}}

\author{Weijie Huang, \hspace{1mm} Zhiping Li\thanks{Corresponding author,
email: lizp@math.pku.edu.cn} \\ {\small LMAM \& School of Mathematical
Sciences, Peking University, Beijing 100871, China}}

\date{}

\maketitle
\begin{abstract}
A mixed finite element method (MFEM), using dual-parametric piecewise biquadratic
and affine (DP-Q2-P1) finite element approximations for the deformation and the pressure like Lagrange
multiplier respectively, is developed and analyzed for the numerical computation of
incompressible nonlinear elasticity problems with large deformation gradient,
and a damped Newton method is applied to solve the resulted discrete problem.
The method is proved to be locking free and stable.
The accuracy and efficiency of the method are illustrated by numerical experiments on
some typical cavitation problems.
\end{abstract}

\noindent \textbf{Key words}:
DP-Q2-P1 mixed finite element, damped Newton method, locking-free, incompressible nonlinear elasticity,
large deformation gradient

\section{Introduction}

For incompressible elasticity, it is well known that, even in the case of small
deformation and linear elasticity, the notorious volume locking can happen and
ultimately leads to the failure of some finite element approximations
\cite{Brezzi,John1999,Rohan,Vivette1986}.
In the case of incompressible linear elasticity, it is well-known how to overcome locking
numerically, for example, by using the enhanced assumed strain methods to increase the
degrees of freedom of the elements \cite{SIMO1990, Chavan2007}, by using the nonconforming
finite element methods to weakening
the global continuity of the numerical solutions \cite{Kouhia1995}, or by
using the mixed finite element methods (MFEMs) to relax the constraint of the
incompressibility on the numerical solutions \cite{Brezzi,Ming}, etc.. However, for incompressible
nonlinear elasticity, especially for large deformation gradient problems which will be addressed
in this paper, there still lack of systematic results.

Let $\Omega\subset \mathbb{R}^2$ be a bounded open domain with smooth boundary occupied
by an isotropic hyper-elastic body in its reference configuration.
Let the stored energy density function of the material
$W(\nabla\bm{u}):M_{+}^{2\times 2}\to \mathrm{R}^+$ be poly-convex,
where $\bm{u}$ is a deformation field and
$M_+^{2 \times 2}$ is the set of $2 \times 2$ matrices with positive eigenvalues.
Since the material is incompressible, the deformation field must satisfy
the constraint $\det\nabla\bm{u} = 1\ a.e.$ in $\Omega$. In the mixed formulation of
nonlinear hyper-elasticity boundary value problems, one considers to solve the saddle point
problem
\begin{equation}
\label{mixed formulation}
(\bm{\tilde{u}},\tilde{p}) = \arg \inf_{\bm{u} \in \mathcal{A}} \sup_{p \in L^2(\Omega )}
E(\bm{u},p),
\end{equation}
where $p$ is the pressure like Lagrangian multiplier (see \cite{Brezzi}),
$E(\bm{u},p)$ is the Lagrangian functional defined as
\begin{equation}
  \label{functional}
  E(\bm{u}, p)=\int_{\Omega} \left( W(\nabla \bm{u}(\bm{x})) - p(\det\nabla u - 1)\right)\,
  \mathrm{d}\bm{x} - \int_{\partial_N \Omega} \bm{t}
  \cdot \bm{u} \,\mathrm{d}s,
\end{equation}
with $\bm{t}$ the traction imposed on the Neumann boundary $\partial_N\Omega$,
and where the set of admissible deformation functions $\mathcal{A}$ is given by
\begin{equation}
  \label{admissible set}
  \mathcal{A} =
  \begin{cases}
  \{\bm{u}\in W^{1,s}(\Omega;\mathbb{R}^2) \ \mbox{is 1-to-1
  a.e.}: \bm{u}|_{\partial_D\Omega} = \bm{u}_0,
  \}, & \text{if $\partial_D\Omega \neq \emptyset$,}\\
  \{\bm{u}\in W^{1,s}(\Omega;\mathbb{R}^2) \ \mbox{is 1-to-1 a.e.}:
  \int_{\Omega}\bm{u}\, \mathrm{d}\bm{x} = \bm{0},
  \}, & \text{otherwise,}
  \end{cases}
\end{equation}
where $s>1$ is a given Sobolev index, and $\partial_D\Omega$ is the Dirichlet
boundary with its 1-D measure $|\partial_D\Omega|\neq 0$.

The variational form of the Euler-Lagrange equation, {\em i.e.} the equilibrium equation,
of the mixed formulation \eqref{mixed formulation}, can be expressed as
\begin{equation}
\label{Euler Lagrange equation}
\left\{
\begin{aligned}
\int_{\Omega}\left( \dfrac{\partial W(\nabla \bm{u})}{\partial \nabla \bm{u}}:\nabla\bm{v}
-p\left(\operatorname{cof}\nabla \bm{u}:\nabla \bm{v}\right)\right) \,\mathrm{d}
\bm{x}&=\int_{\partial_N \Omega}\bm{t}\cdot\bm{v} \,\mathrm{d}s,\quad
\forall \bm{v}\in \mathcal{X}_0, \\
\int_{\Omega}q\left(\det\nabla \bm{u}-1\right) \,\mathrm{d}\bm{x}&=0,\quad \qquad \qquad \;\;\;
\forall q\in \mathcal{M},
\end{aligned}
\right.
\end{equation}
where $\operatorname{cof}\nabla\bm{u}$ denotes the cofactor matrix of $\nabla\bm{u}$, and
\begin{equation}
\mathcal{M} := L^2(\Omega), \;\;\;
\mathcal{X}_0 = \begin{cases}
\left\{\bm{v}\in H^1(\Omega;\mathbb{R}^2): \bm{v}|_{\partial_D\Omega} = \bm{0}\right\},
& \text{if $\partial_D\Omega \neq \emptyset$},\\
\left\{\bm{v}\in H^1(\Omega;\mathbb{R}^2): \int_{\Omega}\bm{v}\,\mathrm{d}\bm{x} =
\bm{0}\right\}, & \text{otherwise},\\
\end{cases}
\end{equation}
are the test function spaces for the pressure $p$ and deformation $\bm{u}$ respectively.

In the present paper, based on the variational form of Euler-Lagrange equation
\eqref{Euler Lagrange equation},  a mixed finite element method (MFEM),
using dual-parametric piecewise biquadratic and affine (DP-Q2-P1) finite element
approximations for the deformation $\bm{u}$ and pressure like Lagrangian
multiplier $p$ respectively, is developed and analyzed for the numerical computation of
incompressible nonlinear elasticity boundary value problems with large deformation gradient,
and a damped Newton method is applied to solve the resulted discrete problem.
The method is shown to be stable (locking free) under some reasonable assumptions
on the mesh regularity (see (M1) and (M2) in \S~2.1), the damping criteria (see (C1) and (C2) in
\S~2.2) and the stability hypothesis on the mixed formulation (see (H) in \S~2.3).
The performance of a DP-Q2-P1 method applied to a cavitation problem, which
shows an extremely large anisotropic deformation near the cavity surface,
is illustrated by numerical experiments and results.
We would like to point out here that the classical stability analysis for Q2-P1 element based on
the divergence free argument do not directly apply to nonlinear elasticity problems
with finite deformation (see for example \cite{Ming}), especially those with
very large nearly singular deformation gradients (see \S~2.3 for details).
The advantage of using the dual-parametric finite elements is that the elements can
well accommodate very large anisotropic deformation as well as
complex physical domain with a reasonable number of degrees of freedom
\cite{Lian Dual,Xu and Henao 2011,SuLiRectan}.

The rest of the paper is organized as follows. \S~2 is devoted to the construction of the
DP-Q2-P1 MFEM and its stability analysis. In \S~3,
the DP-Q2-P1 mixed finite element method is applied to a cavitation problem, and
the accuracy and efficiency of the method is demonstrated by the numerical results.
Some concluding remarks are given in \S~4.

\section{The mixed finite element method and its stability}

\subsection{The DP-Q2-P1 mixed finite element}

Let $(\hat{T}, \hat{P}, \hat{\Sigma})$ be the standard biquadratic-linear mixed
rectangular element with
\begin{align*}
\begin{cases}
\hat{T}=[-1, 1]\times [-1, 1], \\
\hat{P}=\{Q_2(\hat{T}), P_1(\hat{T})\},\\
\hat{\Sigma}=\{\bm{\hat{u}}(\hat{a}_i), 0\le i\le 8; \;\;
\hat{p}(\hat{b}_0), \partial_{\hat{x}_1} \hat{p}(\hat{b}_0),
\partial_{\hat{x}_2} \hat{p}(\hat{b}_0) \},
\end{cases}
\end{align*}
where $\{\hat{ a }_i\}_{i=0}^3$ are the vertices of $\hat{T}$, $\{\hat{ a }_i\}_{i=4}^7$
represent the nodes on the middle points of the corresponding edges of $\hat{T}$,
and $\hat{ a }_8=\hat{b}_0= (0,0)$, as shown in Figure~\ref{reference element}.

Given 4 non-degenerate vertices $\{a _i\}_{i=0}^3$ in anticlockwise order,
5 properly distributed vertices $\{a_i\}_{i=4}^8$, and a smooth injection $F_T$
satisfying $a_i = F_T(\hat{a}_i)$, $i=0,\cdots 8$, then $T = F_T(\hat{T})$ defines
a (curve edged) quadrilateral element (see for example Figure~\ref{general_quad}).
In applications, the most commonly used $F_T$ are bilinear, biquadratic and trigonometric
(see \eqref{F_T}).

\begin{figure}[htb]
 \begin{minipage}[l]{0.49\textwidth}\hspace*{1mm}
	\begin{minipage}[l]{0.225\textwidth}
    \centering    \includegraphics[width=1.4in]{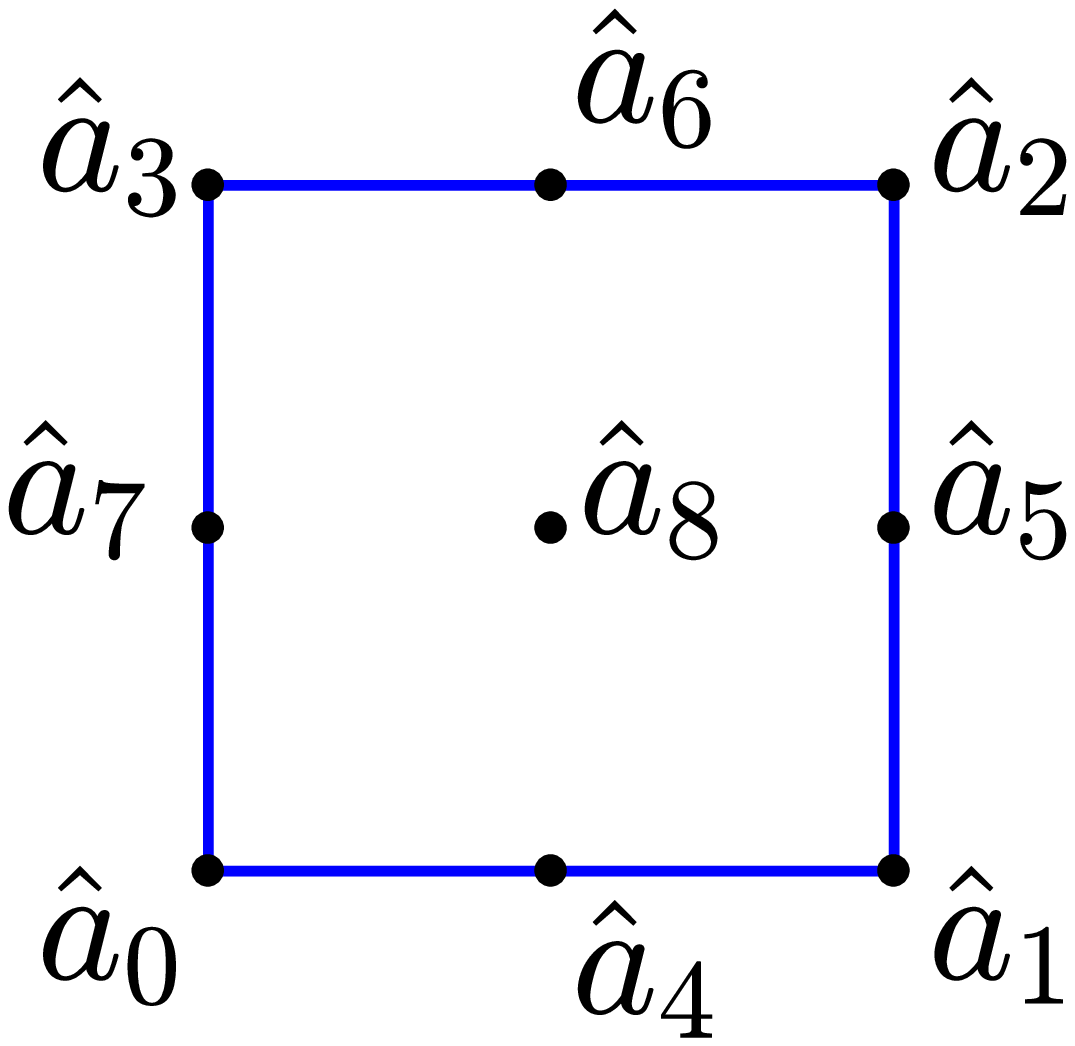}
	\vspace*{-5mm}
    \end{minipage}\hspace*{13mm}
    \begin{minipage}[r]{0.225\textwidth}
    \centering   \includegraphics[width=1.4in]{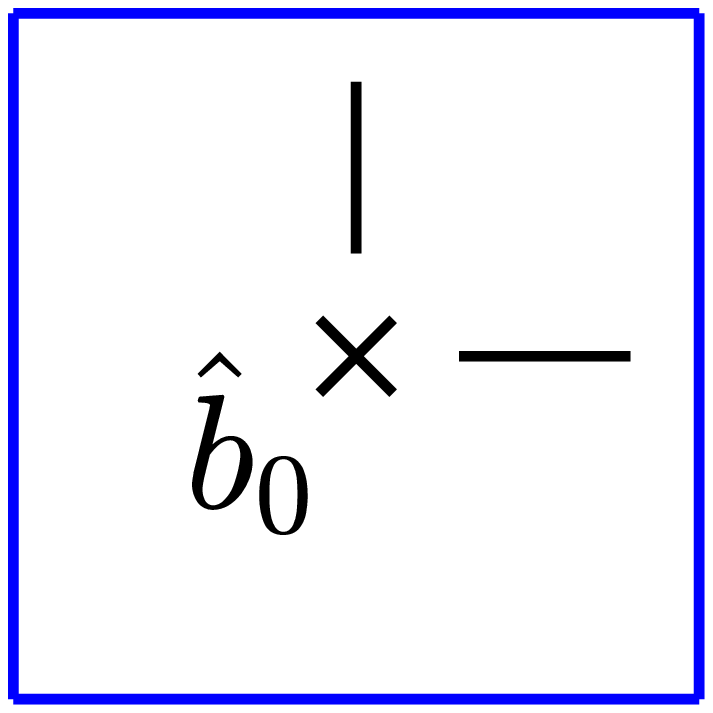}
    \vspace*{-5mm}
    \end{minipage}
    \caption{Reference element $\hat{T}$, $\hat{\Sigma}$.}\label{reference element}
 \end{minipage}
 \begin{minipage}[r]{0.49\textwidth}\hspace{5mm}
    \begin{minipage}[l]{0.225\textwidth}\vspace*{-3mm}
    \centering    \includegraphics[width=1.4in]{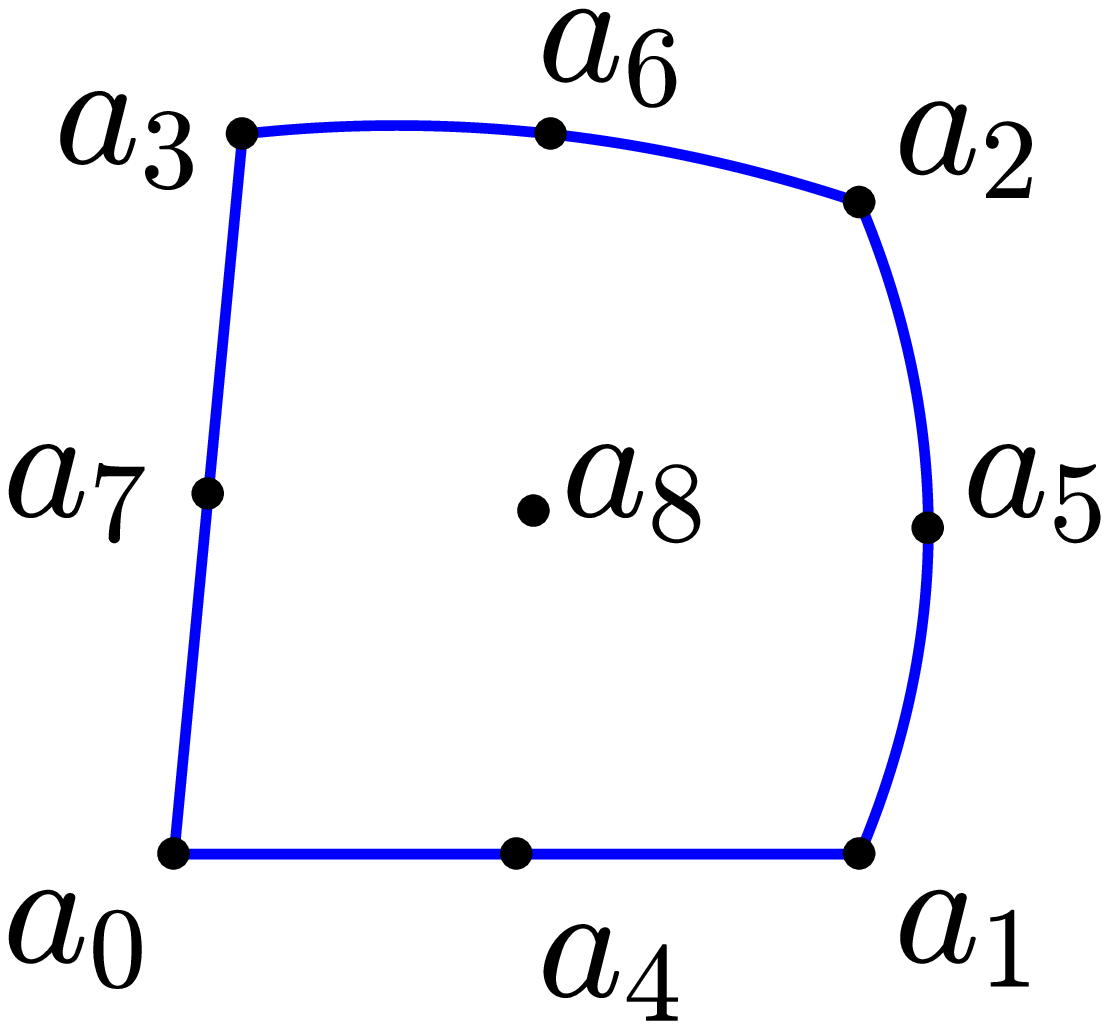}
	\vspace*{-5mm}
    \end{minipage}\hspace*{13mm}
    \begin{minipage}[r]{0.225\textwidth}
    \centering   \includegraphics[width=1.4in]{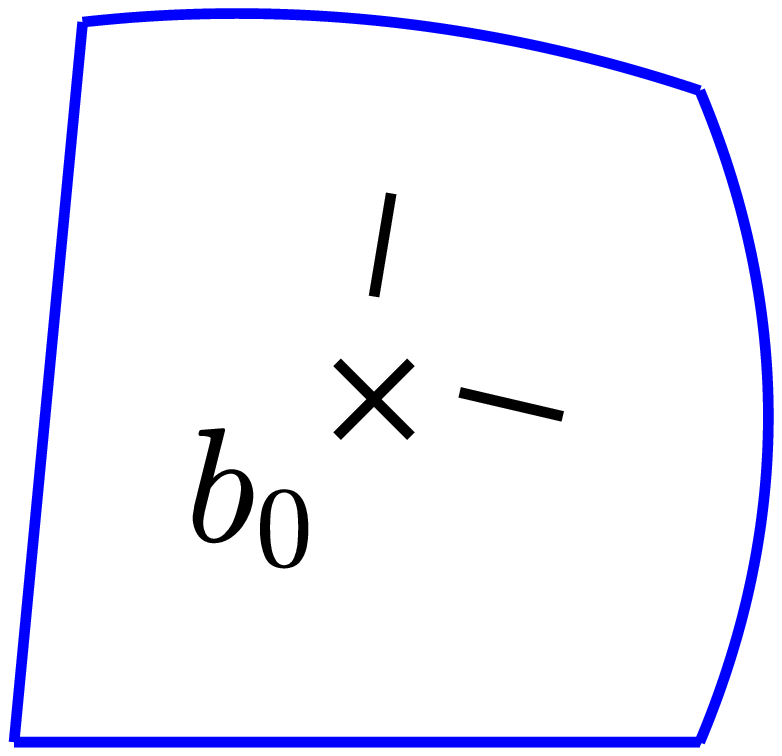}
    \vspace*{-5mm}
    \end{minipage}
    \caption{Element $T$, $\Sigma_T$.}\label{general_quad}
 \end{minipage}
\end{figure}

We define the dual-parametric biquadratic-affine (DP-Q2-P1) mixed finite element
$(T, P, \Sigma)$ as follows:
\begin{equation*}
\begin{cases}
T=F_T(\hat{T})\ \text{being a (curved) quadrilateral element,}\\
P_T=\big\{(\bm{u}, p) : T\to \mathbb{R}^2\times \mathbb{R}\ | \ \bm{u}=\hat{\bm{u}}\comp F_T^{-1},
\hat{\bm{u}}\in Q_2,\ p=\hat{p}\comp F_T^{-1}, \hat{p}\in P_1\big\},\\
\Sigma_T=\big\{\bm{u}(a_i),  a_i=F_T(\hat{ a }_i), 0\le i\le 8;\;
p(\bm{b}_0), \partial_{\hat{x}_1}p(\hat{b}_0)\comp F_T^{-1}, \partial_{\hat{x}_2} p(\hat{b}_0)\comp F_T^{-1},
b_0=F_T(\hat{b}_0) \big\},
\end{cases}
\end{equation*}
and denote
$\tilde{Q}_2 \times \tilde{P}_1 = (Q_2\comp F_T^{-1})\times (P_1\comp F_T^{-1})$.

For simplicity, we assume in this section that $\Omega = \Omega_h$ is properly
partitioned into such quadrilateral elements, {\em i.e.}
$\Omega = \Omega_h = \cup_{T\in \mathscr{T}_h} F_{T}(\hat{T})$.
In addition, we assume the triangulation $\mathscr{T}_h$ satisfies the following regularity
conditions.
\begin{description}
\item[{\bf (M1)}] The edge lengths are of quasi-uniform, {\em i.e.}
$|\wideparen{ a_0 a_3}|\cong |\wideparen{ a_0 a_1}|\cong |\wideparen{ a_1 a_2}|\cong |\wideparen{ a_2 a_3}|$, $h_T\cong h$, \
$\forall T\in \mathscr{T}_h$.

\item[\bf (M2)] The minimum angle condition, {\em i.e.}
$|\bm{l}_1|\cong |\bm{l}_2| \cong O(h_T)$ and $ h_T^{-2}|\bm{l}_1 \wedge \bm{l}_2|\ge c >0$,
$\forall T\in\mathscr{T}_h$, where $\bm{l}_1 = (\vv{a_0a_1} +\vv{a_3a_2} + 8\vv{a_7a_5})$
and $\bm{l}_2 = (\vv{a_0a_3} +\vv{a_1a_2} + 8\vv{a_4a_6})$.
\end{description}
Here and throughout the paper, $X \cong Y$, or equivalently $Y\lesssim X \lesssim Y$, means that
$c^{-1}Y \le X \le cY$ holds for a generic constant $c \ge 1$ independent of $T$ and $h$.

\begin{rem}
It is not difficult for us to show, by the standard scaling argument, that
\begin{equation}\label{scaling}
|\hat{\bm{v}}|_{\gamma,2,\hat{T}}\cong h_T^{\gamma-1}|\bm{v}|_{\gamma,2,T}, \;\; \gamma=0,1, \quad
\forall T \in \mathscr{T}_h, \; \text{and} \;\;\forall \bm{v}\in H^1(T;\mathbb{R}^2)
\end{equation}
remains valid, if $|\partial \bm{x}/\partial \bm{\hat{x}}| = |\partial F_T/\partial
\hat{\bm{x}}| \cong h_T$, and $\det (\partial \bm{x}/\partial \bm{\hat{x}})\cong h_T^2$,
which hold when the triangulation $\mathscr{T}_h$ satisfies (M1) and (M2).
\end{rem}

\subsection{The discretized problem}

Define the finite element function spaces for the admissible
deformation and pressure as
\begin{align}
\label{discrete admissible set A}
\mathcal{X}_h=\mathcal{A}_h :=&
\begin{cases}
\left\{\bm{v}_h\in C(\bar{\Omega};\mathbb{R}^2):\;
\bm{v}_h|_T\in \tilde{Q}_2, \ \bm{v}_h|_{\partial_D\Omega} = \bm{u}_0 \right\},
\quad\; \text{if $\partial_D\Omega\neq\emptyset$,}\\
\left\{\bm{v}_h\in C(\bar{\Omega};\mathbb{R}^2):\;
\bm{v}_h|_T\in \tilde{Q}_2, \ \int_{\Omega}\bm{v}_h\,\mathrm{d}\bm{x} =
\bm{0} \right\}, \quad \text{otherwise,}\\
\end{cases}\\
\mathcal{M}_h:= &\left\{p_h\in L^2(\bar{\Omega}):\;  p_h|_T\in \tilde{P}_1 \right\},
\label{discrete admissible set P}
\end{align}
and define the finite element test function space for the deformation as
\begin{align}
\mathcal{X}_{h,0}:=&
\begin{cases}
\left\{\bm{v}_h\in C(\bar{\Omega};\mathbb{R}^2):\;
\bm{v}_h|_T\in \tilde{Q}_2, \ \bm{v}_h|_{\partial_D\Omega} = \bm{0} \right\},
\quad\; \text{if $\partial_D\Omega\neq\emptyset$,}\\
\left\{\bm{v}_h\in C(\bar{\Omega};\mathbb{R}^2):\;
\bm{v}_h|_T\in \tilde{Q}_2, \ \int_{\Omega}\bm{v}_h\,\mathrm{d}\bm{x} =
\bm{0} \right\}, \quad \text{otherwise.}\\
\end{cases}
\end{align}

In the DP-Q2-P1 mixed finite
element method, the equilibrium equation \eqref{Euler Lagrange equation} is discretized into
the following form
\begin{equation}
\label{discrete Euler Lagrange equation}
\left\{
\begin{aligned}
\int_{\Omega }\dfrac{\partial W(\nabla\bm{u}_h)}{\partial\nabla\bm{u}_h}:\nabla\bm{v}_h-p_h
\operatorname{cof} \nabla \bm{u}_h:\nabla \bm{v}_h\,
\mathrm{d}\bm{x}&=\int_{\partial\Omega_N}\bm{t}\cdot \bm{v}_h \,\mathrm{d}s,
\quad \forall \bm{v}_h\in \mathcal{X}_{h,0}, \\
\int_{\Omega }q_h(\det\nabla \bm{u}_h-1) \,\mathrm{d}\bm{x}&=0,
\qquad \qquad \qquad \;  \forall q_h\in \mathcal{M}_h,
\end{aligned}
\right.
\end{equation}
and, in each iteration step of the damped Newton method to solve this discrete nonlinear problem,
one solves the following discrete linear problem
\begin{equation}
\label{Discrete Newton Method}
\left\{
\begin{aligned}
\text{Find } (\bm{w}_h,p_h) \in \mathcal{X}_{h,0} \times \mathcal{M}_h, \,\text{such} &
\;\text{that} \\ a(\bm{w}_h,\bm{v}_h; \underline{\bm{u}}_h,\underline{p}_h)+
b(\bm{v}_h,p_h;\bm{\underline{u}}_h) & = f(\bm{v}_h; \underline{\bm{u}}_h,\underline{p}_h),\quad
\forall \bm{v}_h\in \mathcal{X}_{h,0}, \\
b(\bm{w}_h,q_h;\bm{\underline{u}}_h)&=g(q_h;\bm{\underline{u}}_h), \qquad  \; \;\;
\forall q_h\in \mathcal{M}_h,
\end{aligned}
\right.
\end{equation}
to obtain a direction modifying $(\bm{w}_h,p_h)$, where $\underline{\bm{u}}_h
:= \bm{u}^k_h\in\mathcal{A}_h$, $\underline{p}_h := p^k_h$ denotes the approximation
solution obtained in the k-th iteration, and
\begin{eqnarray}
&&\label{a(;)} a(\bm{w}_h, \bm{v}_h; \underline{\bm{u}}_h,\underline{p}_h):=
\int_{\Omega} \left(\dfrac{\partial^2 W(\nabla\bm{u}_h)}{\partial(\nabla\bm{u}_h)^2}
\Big|_{\bm{u}_h = \underline{\bm{u}}_h} :\nabla\bm{w}_h\right):\nabla\bm{v}_h -
\underline{p}_h\operatorname{cof}\nabla\bm{w}_h:\nabla\bm{v}_h\,\mathrm{d}\bm{x},\\
&&\label{b(;)} b(\bm{v}_h, q_h;\underline{\bm{u}}_h) := \int_{\Omega}q_h
\operatorname{cof}\nabla\underline{\bm{u}}_h:\nabla\bm{v}_h\,\mathrm{d}\bm{x},\\
&&\label{f(;)} f(\bm{v}_h;\underline{\bm{u}}_h,\underline{p}_h) :=
\int_{\partial_N\Omega}\bm{t}\cdot \bm{v}_h\,\mathrm{d}s -
\int_{\Omega} \dfrac{\partial W(\nabla\bm{u}_h)}{\partial (\nabla\bm{u}_h)}
\Big|_{\bm{u}_h = \underline{\bm{u}}_h}:\nabla\bm{v}_h - \underline{p}_h
\operatorname{cof}\nabla\underline{\bm{u}}_h:\nabla\bm{v}_h\,\mathrm{d}\bm{x},\qquad \\
&&\label{g(;)} g(q_h;\underline{\bm{u}}_h) := -\int_{\Omega}q_h
(\det\nabla\underline{\bm{u}}_h - 1)\,\mathrm{d}\bm{x},
\end{eqnarray}
where $\frac{\partial^2 W(\nabla\bm{u}_h)}{\partial(\nabla\bm{u}_h)^2}$ is a fourth order
tensor. To simplify the notation, $a(\bm{w}_h,\bm{v}_h; \underline{\bm{u}}_h,\underline{p}_h)$,
$b(\bm{w}_h, q_h; \bm{\underline{u}}_h)$, $f(\bm{v}_h; \underline{\bm{u}}_h,\underline{p}_h)$ and
$g(q_h; \underline{\bm{u}}_h)$
will be denoted as $a(\bm{w}_h,\bm{v}_h)$, $b(\bm{w}_h,q_h)$, $f(\bm{v}_h)$, $g(q_h)$
whenever $(\underline{\bm{u}}_h, \underline{p}_h)$ are not directly involved in the calculation.

The whole solution process is summarized
as the following  algorithm.

\vspace*{3mm}
{\bf Algorithm}:
\begin{itemize}
\item Step 1. Provide the initial guess $(\bm{u}_h^0, p_h^0) \in \mathcal{X}_h
  \times \mathcal{M}_h$, the initial damping parameter
  $\alpha_0\in(0, 1]$, the tolerances $TOL$, $TOL'>0$, and set $k := 0$, $\alpha:=\alpha_0$.

\item Step 2. Set $\bm{\underline{u}}_h := \bm{u}_h^k$, and $\underline{p}_h := p_h^k$, and
  solve \eqref{Discrete Newton Method} to obtain
  $(\bm{w}_h,p_h)\in \mathcal{X}_{h,0}\times \mathcal{M}_h$.

\item Step 3. Set $\bm{u}_h^{k+1} := \bm{\underline{u}}_h + \alpha\bm{w}_h$ and
   $p_h^{k+1} := \underline{p}_h + \alpha p_h$.

\item Step 4. If $\bm{u}_h^{k+1}$ satisfies the criteria (C1)-(C2) given below,
go forward to Step 5; otherwise, set $\alpha := \alpha/2$, and go back to Step 3.

\item Step 5. If $\|\bm{u}_h^{k+1} - \bm{u}_h^k\|\le TOL$ and $|p_h^{k+1} - p_h^k| < TOL'$,
  stop; otherwise, set $k := k+1$, $\alpha:=\min\{\alpha_0, 2\alpha \}$ and go back to Step 2.
\end{itemize}

The following conditions are introduced as a criterion in the step~4 of the algorithm
to confine the iteration trajectory to
well behaved deformations, {\em i.e.} orientation preserving finite deformations
without too much oscillations on the deformation gradient.
\begin{description}
\item[{\bf (C1)}] $\sigma \le \lambda_1(\nabla\underline{\bm{u}}_h) \le
\lambda_2(\nabla\underline{\bm{u}}_h) \le 1/\sigma$, and
$0 < c \le \det\nabla\underline{\bm{u}}_h \le C$, $\forall \bm{x} \in\Omega$, where
$\lambda_1(\nabla\underline{\bm{u}}_h) \le \lambda_2(\nabla\underline{\bm{u}}_h)$ are the
eigenvalues of $\nabla\underline{\bm{u}}_h$, and $\sigma \in (0, 1)$,
$0<c < 1 < C$ are constants independent of $h$.

\item[{\bf (C2)}] $h_T |\underline{\bm{u}}_h|_{2,\infty,T} \le \underline{C}$,
   $\forall T\in \mathscr{T}_h$, where $\underline{C}>1$ is a given constant independent of $h$.
\end{description}

\begin{rem}
Notice that $\lambda_1(\nabla\bm{u})$ and $\lambda_2(\nabla\bm{u})$ are the principal strains
of the deformation $\bm{u}$, we see that (C1) is violated only if $\bm{u}$ is in a neighbourhood
of a singular deformation. In general, let $\bm{u}$ be a non-singular
solution to the problem, then it is necessary to choose
$\sigma < \inf_{\bm{x} \in \Omega} \min\{\lambda_1(\nabla \bm{u}),\lambda_2^{-1}(\nabla \bm{u})\}$.
In applications, (C2) can be easily satisfied unless the problem admits only microstructure
solutions, which consists of increasingly oscillatory energy minimizing sequences
\cite{Ball:James1}.
\end{rem}

Our numerical experiments on cavitation problems show that the damped Newton method applied here
in the above algorithm is as expected much more efficient than the modified Picard iteration used by
Lian and Li in \cite{Lian Dual}.

\subsection{Stability of the DP-Q2-P1 mixed finite element method}

To show the stability of the iso-parametric mixed finite element method for the
discrete linear problem \eqref{Discrete Newton Method}, we assume that:
\begin{description}
\item[{\bf (H)}] For $\underline{\bm{u}}\in \mathcal{A}\cap H^1(\Omega;\mathbb{R}^2)$
satisfying $(C1)$, $b(\bm{v},q;\underline{\bm{u}})$ satisfies inf-sup condition,
{\em i.e.} there exists a constant $\beta>0$ such that
\begin{equation}
\label{inf-sup condition}
\sup_{\bm{v}\in \mathcal{X}_0}\dfrac{b(\bm{v},q)}{\|\bm{v}\|_{1,2,\Omega}}\ge
\beta \|q\|_{0,2,\Omega},\quad \forall q\in L^2(\Omega).
\end{equation}
\end{description}

\begin{rem}
If in addition to (C1), $\underline{\bm{u}}$ satisfies certain regularity condition,
then $b(\bm{v},q)$ can be proved to satisfy the inf-sup condition
\eqref{inf-sup condition}(see\cite{Dobrowolski}).
\end{rem}

The key for the DP-Q2-P1 mixed finite element method to be stable and locking free for the problem
\eqref{Discrete Newton Method} is that the discrete inf-sup condition (or LBB condition)
\begin{equation}
\label{LBB condition}
\sup_{\bm{v}_h\in \mathcal{X}_{h,0}}\dfrac{b(\bm{v}_h,q_h;\bm{\underline{u}}_h)}{\Vert
\bm{v}_h\Vert _{1,2,\Omega }}\ge \beta\Vert q_h\Vert _{0,2,\Omega },
\quad  \forall q_h\in \mathcal{M}_h
\end{equation}
holds for a constant $\beta$ independent of the mesh size $h$, which can be established
by means of the famous Fortin criterion {(see Proposition 2.8 on page 58
in \cite{Brezzi})} and a general two steps construction frame as given in
Lemma~\ref{2 steps construction} (see Proposition 2.9 on page 59 in \cite{Brezzi}),
under the mesh regularity conditions (M1)-(M2), the deformation regularity conditions
(C1)-(C2) and the hypothesis (H).
Notice that, for nonlinear elasticity problems \cite{Dobrowolski, Ming},
$$
b(\bm{v}_h, q_h; \bm{\underline{u}}_h) =
\int_{\Omega_\rho}\operatorname{cof}\nabla\bm{\underline{u}}_h:\nabla\bm{v}_hq_h
\,\mathrm{d}\bm{x} = \int_{\Omega_\rho}\operatorname{div}((\operatorname{cof}
\nabla\bm{\underline{u}}_h)^T\bm{v}_h)q_h\,\mathrm{d}\bm{x},
$$
and for nearly singular deformation gradients problems, such as the cavitation problem,
$\operatorname{cof}\nabla\bm{\underline{u}}_h$
can be very ill conditioned. The following stability analysis reveals how the stability
constant $\beta$ depends on the condition number of $\operatorname{cof}\nabla\bm{\underline{u}}_h$,
and ultimately provides an inside perspective to the conditions (M1)-(M2) and (C1)-(C2),
which are crucial to the mesh generation and the iteration
(see step 4 of the algorithm).

Without loss of generality, in this subsection, we limit ourselves to the case
$\partial_D\Omega = \emptyset$. The theory for the case $\partial_D\Omega \neq\emptyset$
can be established in a similar way.

\begin{Lemma}
\label{Fortin Criterion}
(see Fortin Criterion \cite{Brezzi}) Let $b(\bm{v},q;\underline{\bm{u}}_h)$ satisfy the inf-sup condition \eqref{inf-sup condition}. The LBB condition \eqref{LBB condition} holds with a
constant $\beta$ independent of $h$ if and only if there exists an operator
$\Pi_h\in \mathscr{L}(\mathcal{X}_0,\mathcal{X}_{h,0})$ satisfying:
\begin{equation}
\label{the operator}
\left\{
\begin{aligned}
 &b(\bm{v}-\Pi_h\bm{v},q_h;\underline{\bm{u}}_h)=0,\qquad  \forall q_h\in \mathcal{M}_h,\
 \forall \bm{v}\in \mathcal{X}_0, \\
 &\Vert \Pi_h\bm{v}\Vert _{1,2,\Omega }\le c\Vert \bm{v}
 \Vert _{1,2,\Omega },\quad \quad \forall \bm{v}\in \mathcal{X}_0,
\end{aligned}
\right.
\end{equation}
with a constant $c>0$ independent of $h$.
 \end{Lemma}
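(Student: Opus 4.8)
The statement is the classical Fortin criterion, so the plan is to prove the two implications separately, keeping the continuous inf-sup condition \eqref{inf-sup condition} as a standing hypothesis and paying close attention to the $h$-dependence of every constant.

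The sufficiency direction (existence of $\Pi_h$ $\Rightarrow$ LBB), which is the one actually invoked later through the two-step construction of Lemma~\ref{2 steps construction}, I would argue as follows. Fix $q_h\in\mathcal{M}_h\setminus\{\bm{0}\}$ and, for $\varepsilon>0$, pick by \eqref{inf-sup condition} a $\bm{v}\in\mathcal{X}_0$ with $\|\bm{v}\|_{1,2,\Omega}=1$ and $b(\bm{v},q_h;\underline{\bm{u}}_h)\ge(\beta-\varepsilon)\|q_h\|_{0,2,\Omega}$. Property one of $\Pi_h$ gives $b(\Pi_h\bm{v},q_h;\underline{\bm{u}}_h)=b(\bm{v},q_h;\underline{\bm{u}}_h)$, and property two gives $\|\Pi_h\bm{v}\|_{1,2,\Omega}\le c$; in particular $\Pi_h\bm{v}\neq\bm{0}$ since the right-hand side is positive. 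Hence
\[ \sup_{\bm{v}_h\in\mathcal{X}_{h,0}}\frac{b(\bm{v}_h,q_h;\underline{\bm{u}}_h)}{\|\bm{v}_h\|_{1,2,\Omega}}\ge\frac{b(\Pi_h\bm{v},q_h;\underline{\bm{u}}_h)}{\|\Pi_h\bm{v}\|_{1,2,\Omega}}=\frac{b(\bm{v},q_h;\underline{\bm{u}}_h)}{\|\Pi_h\bm{v}\|_{1,2,\Omega}}\ge\frac{1}{c}\,b(\bm{v},q_h;\underline{\bm{u}}_h)\ge\frac{\beta-\varepsilon}{c}\,\|q_h\|_{0,2,\Omega}, \]
and letting $\varepsilon\to0$ yields \eqref{LBB condition} with the $h$-independent constant $\beta/c$.

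For the necessity direction (LBB $\Rightarrow$ existence of $\Pi_h$) I would proceed functional-analytically. Let $B_h:\mathcal{X}_{h,0}\to\mathcal{M}_h'$ be defined by $\langle B_h\bm{w}_h,q_h\rangle=b(\bm{w}_h,q_h;\underline{\bm{u}}_h)$, with transpose $B_h'$. The condition \eqref{LBB condition} is precisely the a priori bound $\|B_h'q_h\|_{\mathcal{X}_{h,0}'}\ge\beta\|q_h\|_{0,2,\Omega}$, so $B_h'$ is injective with closed range; by the closed-range theorem $B_h$ maps onto $\mathcal{M}_h'$, and its restriction to the $H^1$-orthogonal complement of $\ker B_h$ is an isomorphism onto $\mathcal{M}_h'$ whose inverse has norm at most $1/\beta$. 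Given $\bm{v}\in\mathcal{X}_0$, the map $q_h\mapsto b(\bm{v},q_h;\underline{\bm{u}}_h)$ is an element of $\mathcal{M}_h'$; I would then define $\Pi_h\bm{v}$ to be its unique preimage in that orthogonal complement, i.e. the unique $\bm{w}_h$ with $b(\bm{w}_h,q_h;\underline{\bm{u}}_h)=b(\bm{v},q_h;\underline{\bm{u}}_h)$ for all $q_h\in\mathcal{M}_h$. This $\Pi_h$ is linear, lands in $\mathcal{X}_{h,0}$, satisfies property one by construction, and obeys
\[ \|\Pi_h\bm{v}\|_{1,2,\Omega}\le\frac{1}{\beta}\sup_{q_h\in\mathcal{M}_h}\frac{b(\bm{v},q_h;\underline{\bm{u}}_h)}{\|q_h\|_{0,2,\Omega}}\le\frac{C_b}{\beta}\,\|\bm{v}\|_{1,2,\Omega}, \]
where, since $\mathcal{M}_h\subset L^2(\Omega)$, the middle supremum is bounded by the continuity constant $C_b$ of $b(\cdot,\cdot;\underline{\bm{u}}_h)$; this gives property two with $c=C_b/\beta$.

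The routine part is the sufficiency direction. The delicate part, and the one I expect to be the main obstacle, is to keep the constant $c=C_b/\beta$ manifestly independent of $h$ in the necessity direction. This requires two uniform bounds drawn from the paper's hypotheses: that $\beta$ in \eqref{LBB condition} is $h$-independent, which is the substantive content the later construction must supply for the DP-Q2-P1 pair, and that the continuity constant $C_b=\|\operatorname{cof}\nabla\underline{\bm{u}}_h\|_{0,\infty,\Omega}$ is uniformly controlled, which is exactly what condition (C1) furnishes by bounding the eigenvalues and the determinant of $\nabla\underline{\bm{u}}_h$. Tracking these constants, rather than any single estimate, is where the argument becomes genuinely relevant to the cavitation setting, and it is the reason the subsequent analysis monitors how $\beta$ degrades with the conditioning of $\operatorname{cof}\nabla\underline{\bm{u}}_h$.
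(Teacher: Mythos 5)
Your proof is correct, but there is nothing in the paper to compare it against: the paper does not prove this lemma at all, it quotes it as the Fortin criterion and points to Proposition 2.8 on page 58 of \cite{Brezzi}. So the honest comparison is with that cited textbook argument, which your write-up reproduces faithfully. The sufficiency direction (the only one the paper actually uses, through Lemma~\ref{2 steps construction} and the Step 1/Step 2 construction of $\Pi_1$, $\Pi_2$) is airtight: the substitution $b(\Pi_h\bm{v},q_h;\underline{\bm{u}}_h)=b(\bm{v},q_h;\underline{\bm{u}}_h)$ combined with $\Vert\Pi_h\bm{v}\Vert_{1,2,\Omega}\le c$ gives \eqref{LBB condition} with constant $\beta/c$, and you correctly dispose of the degenerate case $\Pi_h\bm{v}=\bm{0}$. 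The necessity direction via the transpose/closed-range argument is also sound, with one caveat worth making explicit: it uses the continuity of $b(\cdot,\cdot;\underline{\bm{u}}_h)$ on $\mathcal{X}_0\times\mathcal{M}$ with an $h$-independent constant, which is not among the stated hypotheses of the lemma; you correctly identify that $C_b=\Vert\operatorname{cof}\nabla\underline{\bm{u}}_h\Vert_{0,\infty,\Omega}$ serves, and that (C1) bounds it uniformly (in two dimensions $\operatorname{cof}\nabla\underline{\bm{u}}_h$ has the same singular values as $\nabla\underline{\bm{u}}_h$, so $C_b\lesssim\sigma^{-1}$). Where your proof adds something beyond the bare citation is exactly this constant-tracking: it makes visible that both the continuity constant and the resulting Fortin bound degrade as powers of $\sigma^{-1}$, which is the theme the paper's own contribution (Lemmas~\ref{Pi_1 lemma} and \ref{int_det} and the Theorem, where the bound $\sigma^{-4}$ appears) then quantifies for the DP-Q2-P1 pair.
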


\begin{Lemma}
\label{2 steps construction}
Let $\Pi_1\in \mathscr{L}(\mathcal{X}_0,\mathcal{X}_{h,0})$ and $\Pi_2\in\mathscr{L}(\mathcal{X}_0,\mathcal{X}_{h,0})$
be such that
\begin{subnumcases}{\label{operator construction}}
\Vert \Pi_1\bm{v}\Vert _{1,2,\Omega }\le c_1\Vert \bm{v}\Vert _{1,2,\Omega },
\qquad\qquad\;\;\; \forall \bm{v}\in \mathcal{X}_0,\label{oc:a} \\
\Vert \Pi_2(I-\Pi_1)\bm{v}\Vert _{1,2,\Omega }\le c_2\Vert
\bm{v}\Vert _{1,2,\Omega }, \quad\; \forall \bm{v}\in \mathcal{X}_0,\label{oc:b}\\
b(\bm{v}-\Pi_2\bm{v},q_h;\bm{\underline{u}}_h)=0, \quad  \forall \bm{v}\in \mathcal{X}_0,\
\forall q_h\in \mathcal{M}_h.\label{oc:c}
\end{subnumcases}
Set $\Pi_h\bm{v}=\Pi_1\bm{v}+\Pi_2(\bm{v}-\Pi_1\bm{v})$, then $\Pi_h\in
\mathscr{L}(\mathcal{X}_0,\mathcal{X}_{h,0})$ satisfies \eqref{the operator}.
\end{Lemma}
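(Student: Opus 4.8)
The plan is to follow the classical Fortin two-step construction (Proposition~2.9 in \cite{Brezzi}): verify directly that the composite operator defined by $\Pi_h\bm{v}=\Pi_1\bm{v}+\Pi_2(\bm{v}-\Pi_1\bm{v})$ is linear, maps $\mathcal{X}_0$ into $\mathcal{X}_{h,0}$, and satisfies the two requirements in \eqref{the operator}. Linearity and the mapping property are immediate: since $\Pi_1,\Pi_2\in\mathscr{L}(\mathcal{X}_0,\mathcal{X}_{h,0})$, the operator $\Pi_h$ is built from them by composition, scaling and addition, and in particular $\Pi_1\bm{v}\in\mathcal{X}_{h,0}$ and $\Pi_2(\bm{v}-\Pi_1\bm{v})\in\mathcal{X}_{h,0}$, so $\Pi_h\bm{v}\in\mathcal{X}_{h,0}$. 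The substance lies in checking the orthogonality relation and the uniform bound.

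For the orthogonality relation, the key algebraic observation is to rewrite the residual. Setting $\bm{w}:=(I-\Pi_1)\bm{v}=\bm{v}-\Pi_1\bm{v}$, I would compute
\[
\bm{v}-\Pi_h\bm{v}=\bm{v}-\Pi_1\bm{v}-\Pi_2(\bm{v}-\Pi_1\bm{v})=\bm{w}-\Pi_2\bm{w}=(I-\Pi_2)\bm{w}.
\]
Since $\mathcal{X}_{h,0}\subset\mathcal{X}_0$ and $\Pi_1\bm{v}\in\mathcal{X}_{h,0}$, the element $\bm{w}$ lies in $\mathcal{X}_0$, so hypothesis \eqref{oc:c} is applicable with its argument taken to be $\bm{w}$. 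This yields $b(\bm{v}-\Pi_h\bm{v},q_h;\underline{\bm{u}}_h)=b(\bm{w}-\Pi_2\bm{w},q_h;\underline{\bm{u}}_h)=0$ for every $q_h\in\mathcal{M}_h$, which is precisely the first line of \eqref{the operator}.

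For the uniform bound, I would apply the triangle inequality in $\|\cdot\|_{1,2,\Omega}$ together with \eqref{oc:a} and \eqref{oc:b}:
\[
\|\Pi_h\bm{v}\|_{1,2,\Omega}\le\|\Pi_1\bm{v}\|_{1,2,\Omega}+\|\Pi_2(I-\Pi_1)\bm{v}\|_{1,2,\Omega}\le(c_1+c_2)\|\bm{v}\|_{1,2,\Omega},
\]
so the second line of \eqref{the operator} holds with $c:=c_1+c_2$, which is independent of $h$ as long as $c_1$ and $c_2$ are. I do not expect a genuine analytic obstacle here; the construction is algebraic once \eqref{operator construction} is in hand. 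The only point demanding care is the membership $\bm{w}=(I-\Pi_1)\bm{v}\in\mathcal{X}_0$, which is exactly what licenses the use of \eqref{oc:c} on the residual $(I-\Pi_2)\bm{w}$ and which in turn rests on the inclusion $\mathcal{X}_{h,0}\subset\mathcal{X}_0$. Once this is noted, the result follows from the two displayed identities.
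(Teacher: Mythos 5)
Your proof is correct, and it is essentially the classical two-step Fortin argument: the paper itself gives no proof of this lemma, deferring to Proposition~2.9 on page~59 of \cite{Brezzi}, whose proof is exactly your computation $\bm{v}-\Pi_h\bm{v}=(I-\Pi_2)(I-\Pi_1)\bm{v}$ followed by the triangle inequality with $c=c_1+c_2$. You also correctly isolate the one point requiring care, namely that $(I-\Pi_1)\bm{v}\in\mathcal{X}_0$ (via $\mathcal{X}_{h,0}\subset\mathcal{X}_0$) so that hypothesis \eqref{oc:c} may be applied to it.
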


\emph{\textbf{Step 1}}. The construction of $\Pi_1\in \mathscr{L}(\mathcal{X}_0,\mathcal{X}_{h,0})$.
Let $(\bar{\mathcal{X}}_{h,0},\bar{\mathcal{M}}_h)$ be given by
\begin{equation}
\label{sub space}
\begin{cases}
\bar{\mathcal{X}}_{h,0} = \left\{\bm{v}_h \in \mathcal{X}_{h,0}:\; \bm{v}_h|_T \in
\tilde{Q}_1\oplus\text{span}\{\bm{p}_1,\bm{p}_2,\bm{p}_3,\bm{p}_4\} \right\} \\
\bar{\mathcal{M}}_h=\left\{q_h \in \mathcal{M}_h:\;q_h|_T \in \tilde{P}_0\right\},
\end{cases}
\end{equation}
where $\tilde{Q}_1= Q_1\comp F_T^{-1}, \tilde{P}_0=P_0\comp F_T^{-1}$, and $\{\bm{p}_{i}\}_{i=1}^{4}$
are the edge bubble functions with respect to the edges $\{e_{i}\}_{i=1}^{4}$ of $T$. For example,
for $i=1$, let $\bm{x}=(x_1,x_2)$ and
$\hat{\bm{x}}=F_T^{-1}(\bm{x})=(\hat{x}_1,\hat{x}_2)$, then
$$
\bm{p}_1(\bm{x})=(\hat{q}_1\comp F_T^{-1}(\hat{\bm{x}}))\bm{n}_1(F_T(-1,\hat{x}_2)),
$$
where $\hat{q}_1=(1-\hat{x}_2^2)(1-\hat{x}_1)$ and $\bm{n}_1$ is the unit out normal of
the edge $e_1$. The formulae for $\{\bm{p}_{i}\}_{i=2}^{4}$ are similar.
Obviously $\bm{p}_i(\bm{x}) =0$, $\forall \bm{x} \in \partial T \setminus e_i$.
In particular,  we notice that
$\{\bm{p}_{i}\}_{i=1}^{4}$
have zero tangential components on the edges of $T=F_T(\hat{T})$.

Firstly, let $\tilde{\Pi}_1: \mathcal{X}_0 \rightarrow
\left\{\bm{v}_h \in C(\Omega ; \mathbb{R}^2):\; \bm{v}_h|_T \in \tilde{Q}_1, \forall T \in
\mathscr{T}_h\right\}$ be the Cl\'{e}ment interpolation operator, since (M1) is satisfied, it follows
from the standard scaling argument (see for example Corollary 2.1 on page 106 in \cite{Brezzi}) that
\begin{equation}\label{Clement}
\sum_{T \in \mathscr{T}_h}h_T^{2\gamma-2}|\bm{v}-\tilde{\Pi}_1\bm{v}|_{\gamma,2,T}^2\lesssim
|\bm{v}|_{1,2,\Omega }^2,\quad \gamma = 0,1.
\end{equation}
Define $\bar{\Pi}_1 \bm{v} = \tilde{\Pi}_1 \bm{v} - \frac{1}{|\Omega |}
\int_{\Omega }\tilde{\Pi}_1 \bm{v} \,\mathrm{d}\bm{x}$, then
$\bar{\Pi}_1\in \mathscr{L}(\mathcal{X}_0,\mathcal{\bar{X}}_{h,0})$. Since,
$\int_{\Omega } \bm{v} \,\mathrm{d}\bm{x}=0$, it follows from the H\"{o}lder
inequality, \eqref{Clement} and  $h_T\cong h$ (see (M1)) that
\begin{equation*}
\begin{aligned}
\bigg|\dfrac{1}{|\Omega |}\int_{\Omega }\tilde{\Pi}_1\bm{v}\,\mathrm{d}\bm{x}
\bigg| \le \dfrac{1}{|\Omega |}\int_{\Omega }|\tilde{\Pi}_1\bm{v}-\bm{v}|
\,\mathrm{d}\bm{x}\lesssim \|\tilde{\Pi}_1\bm{v}-\bm{v}\|_{0,2,\Omega }\lesssim
h|\bm{v}|_{1,2,\Omega }.
\end{aligned}
\end{equation*}
Consequently, by \eqref{Clement} and $h_T\cong h$, we have
\begin{equation}\label{Clement2}
|\bm{v}-\bar{\Pi}_1\bm{v}|_{\gamma,2,\Omega }\lesssim
h^{1-\gamma}|\bm{v}|_{1,2,\Omega },\quad \gamma = 0,1.
\end{equation}
Next, let $\tilde{\Pi}_2: \mathcal{X}_0 \rightarrow
\{ \bm{v}_h \in C(\Omega ; \mathbb{R}^2) : \ \bm{v}_h|_T \in
\text{span}\{\bm{p}_1,\bm{p}_2,\bm{p}_3,\bm{p}_4\} \}$ be defined by
\begin{equation}
\label{operator2}
\left\{
\begin{aligned}
&\tilde{\Pi}_2\bm{v}|_T\in \text{span}\{\bm{p}_1,\bm{p}_2,\bm{p}_3,\bm{p}_4\},\\
&\int_{e_i}(\operatorname{cof}\nabla \underline{\bm{u}}_h^{\rm T}
\tilde{\Pi}_2\bm{v})\cdot \bm{n}_i\,\mathrm{d}s =
\int_{e_i}(\operatorname{cof}\nabla\underline{\bm{u}}_h^{\rm T}\bm{v})\cdot \bm{n}_i\,\mathrm{d}s,
\ \forall e_i = F_T^{-1} (\hat{e}_i), \ i=1,2,3,4.
\end{aligned}
\right.
\end{equation}
Define $\bar{\Pi}_2 \bm{v} = \tilde{\Pi}_2 \bm{v} - \frac{1}{|\Omega |}
\int_{\Omega }\tilde{\Pi}_2 \bm{v} \,\mathrm{d}\bm{x}$, then we have
$\nabla \bar{\Pi}_2\bm{v} \equiv \nabla\tilde{\Pi}_2\bm{v}$,
for all $\bm{v} \in H^1(\Omega ;\mathbb{R}^2)$, $\bar{\Pi}_2\in
\mathscr{L}(\mathcal{X}_0,\mathcal{\bar{X}}_{h,0})$, and in particular, as
$\operatorname{div}(\operatorname{cof}\nabla\underline{\bm{u}}_h|_T)=0$, we have
\begin{equation}
\label{third condition}
\begin{aligned}
\int_T\operatorname{cof}\nabla\underline{\bm{u}}_h:\nabla(\bar{\Pi}_2\bm{v}-\bm{v})\,\mathrm{d}\bm{x}
=\int_{\partial T}(\operatorname{cof}\nabla\underline{\bm{u}}_h^{\rm T}
(\tilde{\Pi}_2\bm{v}-\bm{v}))\cdot\bm{n}\,\mathrm{d}s=0.
\end{aligned}
\end{equation}

Now, define $\Pi_1 \in \mathscr{L}(\mathcal{X}_0,\mathcal{X}_{h,0})$ by setting
$\Pi_1 \bm{v} \triangleq \bar{\Pi}_h\bm{v}=\bar{\Pi}_1\bm{v}+\bar{\Pi}_2
(\bm{v}-\bar{\Pi}_1\bm{v})$, $\forall \bm{v} \in \mathcal{X}_0$.

\vskip 3mm
\emph{\textbf{Step 2.}} The construction of $\Pi_2 \in \mathscr{L}(\mathcal{X}_0,\mathcal{X}_{h,0})$.
Denote the bi-quadratic bubble function space on $\hat{T}$ by
$\hat{\bm{B}} = \{ \hat{\bm{b}}(\hat{\bm{x}})=(b_1(1-\hat{x}_1^2)(1-\hat{x}_2^2),
b_2(1-\hat{x}_1^2)(1-\hat{x}_2^2)) \}$. Define
\begin{equation}\label{bubble}
\mathcal{B}_h=\{ \bm{b}\in C(\bar{\Omega};\mathbb{R}^2) :
\bm{b}|_T=\hat{\bm{b}} \comp F_T^{-1}, \ \hat{\bm{b}}\in \hat{\bm{B}} \}.
\end{equation}
Notice that $\mathcal{X}_{h,0} = \bar{\mathcal{X}}_{h,0} + \mathcal{B}_h$. Define
$\Pi_2 : \{\bm{v}\in \mathcal{X}_0: \int_T\operatorname{cof}\nabla\underline{\bm{u}}_h:
\nabla\bm{v}\,\mathrm{d}\bm{x}=0\}\rightarrow \mathcal{B}_h$ as the unique solution of
\begin{equation}
\label{operator2 construction}
\int_T\operatorname{cof}\nabla\underline{\bm{u}}_h:\nabla(\Pi_2\bm{v}-\bm{v})\ q_h
\,\mathrm{d}\bm{x}=0,\ \forall q_h\in \tilde{P}_1(T) \setminus \tilde{P}_0(T), \
\forall T \in \mathscr{T}_h,
\end{equation}
Since $\Pi_2\bm{v}$ is a bubble function on $T \in \mathscr{T}_h$ and
$\operatorname{div}(\operatorname{cof}\nabla\bm{\underline{u}}_h|_T)=0$, we have
\begin{equation}\label{Pi v null}
\int_{T}\operatorname{cof}\nabla\underline{\bm{u}}_h:\nabla\Pi_2\bm{v} \,
\mathrm{d}\bm{x}=\int_{\partial T}(\operatorname{cof}\nabla
\underline{\bm{u}}_h)\bm{n}\cdot \Pi_2\bm{v}
\,\mathrm{d}s-\int_{T}\operatorname{div}(\operatorname{cof}\nabla\bm{\underline{u}}_h)
\cdot \Pi_2\bm{v}\, \mathrm{d}\bm{x} = 0.
\end{equation}

\begin{Lemma}\label{Pi_1 lemma}
Let $\mathscr{T}_h$ satisfy the condition (M1), and $\underline{\bm{u}}_h$ satisfy the condition (C1).
Then, $\Pi_1\in\mathscr{L}(\mathcal{X}_0,\bar{\mathcal{X}}_{h,0})$, as defined in step 1, satisfies
\begin{equation}
\left\{
\begin{aligned}
\label{operator1 property}
&\Vert \Pi_1\bm{v}\Vert _{1,2,\Omega }\lesssim\dfrac{1}{\sigma^2}\Vert \bm{v}
\Vert _{1,2,\Omega },\quad \forall \bm{v}\in \mathcal{X}_0,\\
&\int_T\operatorname{cof}\nabla \underline{\bm{u}}_h:
\nabla(\Pi_1\bm{v}-\bm{v})\,\mathrm{d}\bm{x}=0,\quad \forall \bm{v}\in \mathcal{X}_0,
\ \forall T\in \mathscr{T}_h.
\end{aligned}
\right.
\end{equation}
\end{Lemma}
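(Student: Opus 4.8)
The plan is to treat the two assertions of \eqref{operator1 property} separately: the moment identity follows essentially for free from the construction, while the $H^1$ bound requires a localised estimate that carefully tracks the dependence on $\sigma$.

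For the second identity in \eqref{operator1 property} I would write $\bm{w}=\bm{v}-\bar{\Pi}_1\bm{v}$, so that $\Pi_1\bm{v}-\bm{v}=\bar{\Pi}_2\bm{w}-\bm{w}$. Since $\bm{v}\in\mathcal{X}_0$ has zero mean and $\int_\Omega\bar{\Pi}_1\bm{v}\,\mathrm{d}\bm{x}=\bm{0}$ by construction, $\bm{w}\in\mathcal{X}_0$; thus \eqref{third condition}, applied with $\bm{v}$ replaced by $\bm{w}$, gives $\int_T\operatorname{cof}\nabla\underline{\bm{u}}_h:\nabla(\Pi_1\bm{v}-\bm{v})\,\mathrm{d}\bm{x}=0$ on every $T\in\mathscr{T}_h$. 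No further work is needed, because the flux-matching definition \eqref{operator2} of $\tilde{\Pi}_2$ together with Piola's identity $\operatorname{div}\operatorname{cof}\nabla\underline{\bm{u}}_h|_T=0$ is exactly what produces \eqref{third condition}.

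For the norm bound I would split $\|\Pi_1\bm{v}\|_{1,2,\Omega}\le\|\bar{\Pi}_1\bm{v}\|_{1,2,\Omega}+\|\bar{\Pi}_2\bm{w}\|_{1,2,\Omega}$. The first term is controlled by $|\bm{v}|_{1,2,\Omega}$ alone: \eqref{Clement2} with $\gamma=1$ gives $|\bar{\Pi}_1\bm{v}|_{1,2,\Omega}\lesssim|\bm{v}|_{1,2,\Omega}$, while the zero-mean property of $\bar{\Pi}_1\bm{v}$ and the Poincar\'e inequality control its $L^2$ part. For the second term, recall $\nabla\bar{\Pi}_2\bm{w}\equiv\nabla\tilde{\Pi}_2\bm{w}$ and that $\bar{\Pi}_2\bm{w}$ has zero mean, so by Poincar\'e it suffices to bound $|\tilde{\Pi}_2\bm{w}|_{1,2,\Omega}$ elementwise. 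On a fixed $T$ write $\tilde{\Pi}_2\bm{w}|_T=\sum_{i=1}^4 c_i\bm{p}_i$; since each $\bm{p}_i$ vanishes on $\partial T\setminus e_i$, the linear system \eqref{operator2} for $(c_1,\dots,c_4)$ is diagonal, with $c_i\int_{e_i}\hat{q}_i\,\bm{n}_i^{\mathrm{T}}\operatorname{cof}\nabla\underline{\bm{u}}_h\,\bm{n}_i\,\mathrm{d}s=\int_{e_i}(\operatorname{cof}\nabla\underline{\bm{u}}_h^{\mathrm{T}}\bm{w})\cdot\bm{n}_i\,\mathrm{d}s$. Bounding the right-hand side above by $\|\operatorname{cof}\nabla\underline{\bm{u}}_h\|_{0,\infty}\lesssim\sigma^{-1}$ times an edge $L^2$ norm of $\bm{w}$, and the diagonal coefficient below by $\gtrsim\sigma h_T$ (the delicate point, see below), yields $|c_i|\lesssim\sigma^{-2}h_T^{-1/2}\|\bm{w}\|_{0,2,e_i}$. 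Using $|\bm{p}_i|_{1,2,T}\cong O(1)$ from the scaling \eqref{scaling} and then the scaled trace inequality $h_T^{-1}\|\bm{w}\|_{0,2,e_i}^2\lesssim h_T^{-2}\|\bm{w}\|_{0,2,T}^2+|\bm{w}|_{1,2,T}^2$ gives $|\tilde{\Pi}_2\bm{w}|_{1,2,T}^2\lesssim\sigma^{-4}\big(h_T^{-2}\|\bm{w}\|_{0,2,T}^2+|\bm{w}|_{1,2,T}^2\big)$.

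Summing over $T$ and invoking $h_T\cong h$ from (M1) turns this into $|\tilde{\Pi}_2\bm{w}|_{1,2,\Omega}^2\lesssim\sigma^{-4}\big(h^{-2}\|\bm{w}\|_{0,2,\Omega}^2+|\bm{w}|_{1,2,\Omega}^2\big)$; inserting the Cl\'ement bounds \eqref{Clement2} for $\bm{w}=\bm{v}-\bar{\Pi}_1\bm{v}$ (namely $\|\bm{w}\|_{0,2,\Omega}\lesssim h|\bm{v}|_{1,2,\Omega}$ and $|\bm{w}|_{1,2,\Omega}\lesssim|\bm{v}|_{1,2,\Omega}$) collapses both terms to $\sigma^{-4}|\bm{v}|_{1,2,\Omega}^2$, whence $\|\bar{\Pi}_2\bm{w}\|_{1,2,\Omega}\lesssim\sigma^{-2}|\bm{v}|_{1,2,\Omega}$. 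Combining the two contributions and using $\sigma<1$ (so $\sigma^{-2}\ge1$) together with $\|\bm{v}\|_{1,2,\Omega}\cong|\bm{v}|_{1,2,\Omega}$ on $\mathcal{X}_0$ gives the claimed bound $\|\Pi_1\bm{v}\|_{1,2,\Omega}\lesssim\sigma^{-2}\|\bm{v}\|_{1,2,\Omega}$. The one genuinely nontrivial step, and the main obstacle, is the elementwise lower bound $\int_{e_i}\hat{q}_i\,\bm{n}_i^{\mathrm{T}}\operatorname{cof}\nabla\underline{\bm{u}}_h\,\bm{n}_i\,\mathrm{d}s\gtrsim\sigma h_T$ that makes the diagonal system uniformly invertible: this is where (C1) enters, since the singular values of $\operatorname{cof}\nabla\underline{\bm{u}}_h$ coincide with those of $\nabla\underline{\bm{u}}_h$ and hence lie in $[\sigma,\sigma^{-1}]$, but one must still argue, using orientation preservation and the near-constancy of $\operatorname{cof}\nabla\underline{\bm{u}}_h$ over the small edge $e_i$, that the normal–normal component stays comparable to $\sigma$ rather than degenerating.
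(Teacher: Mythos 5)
Your proof is correct and follows the paper's argument essentially step for step: the moment identity is obtained exactly as in the paper by applying \eqref{third condition} to $\bm{w}=\bm{v}-\bar{\Pi}_1\bm{v}$, and the $H^1$ bound uses the same ingredients — the diagonal solve for the edge-bubble coefficients (the paper's \eqref{alpha i  definition}), the numerator bound $\lesssim \sigma^{-1}h_T\Vert\hat{\bm{v}}\Vert_{1,2,\hat{T}}$, the denominator bound $\gtrsim \sigma h_T$, the Cl\'ement estimates \eqref{Clement2} and the Poincar\'e inequality — yielding the identical $\sigma^{-2}$ constant (your scaled trace inequality in physical coordinates is just the paper's reference-element trace argument in different clothing). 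The one point you flag as the main obstacle, namely that (C1)'s spectral bounds must control the normal--normal component $\bm{n}_i^{\rm T}\operatorname{cof}\nabla\underline{\bm{u}}_h\,\bm{n}_i$ from below, is precisely the step the paper dispatches by asserting $\lambda_1(\operatorname{cof}\nabla\underline{\bm{u}}_h)\gtrsim\sigma$ in \eqref{denominator}, so your treatment is, if anything, more candid than the paper's about what remains to be justified there.
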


\begin{proof}
Since $\tilde{\Pi}_2\in \mathscr{L}(\mathcal{X}_0,\mathcal{\bar{X}}_{h,0})$ is defined by
\eqref{operator2}, thus, by solving the linear system, $\tilde{\Pi}_2\bm{v}$ can be
explicitly expressed as $\tilde{\Pi}_2 \bm{v}=\sum\limits_{i=1}^{4}\alpha_i(\bm{v}) \bm{p}_i$,
where
\begin{equation}
\label{alpha i  definition}
\begin{aligned}
\alpha_i=\Big[\int_{e_i}(\operatorname{cof}\nabla\underline{\bm{u}}_h^T\bm{v})
\cdot\bm{n}_i\,\mathrm{d}s\Big]\Big/\Big[\int_{e_i}(\operatorname{cof}
\nabla\underline{\bm{u}}_h^T\bm{p}_i)\cdot \bm{n}_i\, \mathrm{d}s\Big],\quad i =1,2,3,4.
\end{aligned}
\end{equation}
Noticing that (C1) implies that $\lambda_2(\operatorname{cof}\nabla \underline{\bm{u}}_h)
\lesssim \sigma^{-1}$, by the trace theorem, we have
\begin{equation}\label{numerator}
\Big|\int_{e_i}(\operatorname{cof}\nabla\underline{\bm{u}}_h^T\bm{v})\cdot \bm{n}_i
\, \mathrm{d}s\Big|\lesssim \lambda_2(\nabla\underline{\bm{u}}_h)
\int_{e_i}|\bm{v}|\,\mathrm{d}s
\cong\dfrac{h_T}{\sigma}\int_{\hat{e}_i}|\hat{\bm{v}}|
\,\mathrm{d}\hat{s}\lesssim \dfrac{h_T}{\sigma}\|\hat{\bm{v}}\|_{1,2,\hat{T}}.
\end{equation}
Similarly, since $\lambda_1(\operatorname{cof}\nabla\underline{\bm{u}}_h)\gtrsim
\sigma$, we have
\begin{equation}\label{denominator}
\begin{aligned}
\Big|\int_{e_i}(\operatorname{cof}\nabla\underline{\bm{u}}_h^T\bm{p}_i)\cdot\bm{n}_i
\,\mathrm{d}s\Big| =\Big|\int_{e_i}\bm{p}_i\cdot(\operatorname{cof}\nabla
\underline{\bm{u}}_h\bm{n}_i) \,\mathrm{d}s\Big|
\gtrsim \sigma h_T\int_{\hat{e}_i}\hat{q}_i \,\mathrm{d}\hat{s}.
\end{aligned}
\end{equation}
Therefore, \eqref{alpha i  definition}-\eqref{denominator} yields that
\begin{equation}
\label{alpha i}
\begin{aligned}
|\alpha_i|\lesssim \dfrac{1}{\sigma^2}\|\hat{\bm{v}}\|_{1,2,\hat{T}},\quad i=1,2,3,4.
\end{aligned}
\end{equation}
Hence, by the standard scaling argument, we obtain
\begin{equation}
\label{bar pi2 L2}
|\tilde{\Pi}_2\bm{v}|^2_{1,2,T}=\Big|\sum_{i=1}^4\alpha_i\bm{p}_i\Big|^2_{1,2,T}\lesssim
\dfrac{1}{\sigma^4}(h_T^{-2}\|\bm{v}\|^2_{0,2,T}+|\bm{v}|^2_{1,2,T}).
\end{equation}
Since $\nabla\bar{\Pi}_2\bm{v}=\nabla\tilde{\Pi}_2\bm{v}$, it follows from
\eqref{Clement2} and \eqref{bar pi2 L2} that
\begin{equation*}
\begin{aligned}
|\Pi_1\bm{v}|^2_{1,2,\Omega }
\lesssim &|\bar{\Pi}_1\bm{v}|^2_{1,2,\Omega }
+\sum_{T}|\bar{\Pi}_2(\bm{v}-\bar{\Pi}_1\bm{v})|^2_{1,2,T}\\ \lesssim
&|\bar{\Pi}_1\bm{v}|^2_{1,2,\Omega } +\sum_{T}\dfrac{1}{\sigma^4}(h_T^{-2}
\|\bm{v}-\bar{\Pi}_1\bm{v}\|^2_{0,2,T}+|\bm{v}-\bar{\Pi}_1\bm{v}|^2_{1,2,T})\lesssim
\dfrac{1}{\sigma^4}|\bm{v}|^2_{1,2,\Omega }.
\end{aligned}
\end{equation*}
Recall $\int_{\Omega } \bm{v}\,\mathrm{d}\bm{x}=0$, $\forall \bm{v}\in \mathcal{X}_0$,
this and the Poincar\'{e} inequality lead to the inequality in \eqref{operator1 property}.
On the other hand, by \eqref{third condition}, we have, for all $\bm{v}\in \mathcal{X}_0$,
\begin{equation*}
\int_T\operatorname{cof}\nabla \underline{\bm{u}}_h:
\nabla(\Pi_1\bm{v}-\bm{v})\,\mathrm{d}\bm{x}=\int_T\operatorname{cof}\nabla \underline{\bm{u}}_h:
\nabla\big(\bar{\Pi}_2
(\bm{v}-\bar{\Pi}_1\bm{v})-(\bm{v}-\bar{\Pi}_1\bm{v})\big)\,\mathrm{d}\bm{x}=0.
\end{equation*}
This completes the proof of the lemma.
\end{proof}

\begin{Lemma}\label{int_det}
Let $\mathscr{T}_h$ and $\underline{\bm{u}}_h$ satisfy the mesh regularity conditions (M1)-(M2)
and the deformation regularity conditions (C1)-(C2) respectively. Let
$\hat{b} = (1-\hat{x}_1^2)(1-\hat{x}_2^2)$ be the bubble function on
$\hat{T}$. Then
\begin{equation}
\det\Big(\int_{\hat{T}}\hat{b} \operatorname{cof}\nabla_{\hat{x}}\underline{\hat{\bm{u}}}_h(\bm{x})
\,\mathrm{d}\bm{\hat{x}}\Big)\cong h_T^2.
\label{det int b cof nabla u =h^2}
\end{equation}
where the gradient operator $\nabla_{\hat{x}}:=(\partial_{\hat{x}_1}, \partial_{\hat{x}_2})$.
\end{Lemma}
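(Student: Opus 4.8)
The plan is to turn the determinant of a matrix-valued integral into a single scalar nondegeneracy condition that I can read off from (M2), with (C1)--(C2) supplying the orientation and variation control. Throughout write $\bm\phi=\underline{\hat{\bm u}}_h=\underline{\bm u}_h\comp F_T$ for the deformation pulled back to $\hat T$, so that by the chain rule $\nabla_{\hat x}\bm\phi=(\nabla\underline{\bm u}_h\comp F_T)\,J_{F_T}$ with $J_{F_T}=\partial F_T/\partial\hat{\bm x}$. Denote by $M$ the matrix inside the determinant in \eqref{det int b cof nabla u =h^2}. The first, and cleanest, step is a reduction: because in two dimensions $\operatorname{cof}$ is a \emph{linear} operation on matrices, it commutes with integration against the scalar weight $\hat b$, so
\[
M=\int_{\hat T}\hat b\,\operatorname{cof}\nabla_{\hat x}\bm\phi\,\mathrm d\hat{\bm x}=\operatorname{cof}\!\Big(\int_{\hat T}\hat b\,\nabla_{\hat x}\bm\phi\,\mathrm d\hat{\bm x}\Big)=\operatorname{cof}N,\qquad N:=\int_{\hat T}\hat b\,\nabla_{\hat x}\bm\phi\,\mathrm d\hat{\bm x}.
\]
Since $\det\operatorname{cof}N=\det N$ in two dimensions, \eqref{det int b cof nabla u =h^2} is equivalent to $\det N\cong h_T^2$.

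The upper bound is immediate. Using $\operatorname{cof}(AB)=\operatorname{cof}A\,\operatorname{cof}B$ together with (C1) (which bounds $|\nabla\underline{\bm u}_h|$ and $|\operatorname{cof}\nabla\underline{\bm u}_h|$ by constants) and the scaling relations $|\partial F_T/\partial\hat{\bm x}|\cong h_T$, $\det(\partial F_T/\partial\hat{\bm x})\cong h_T^2$ valid under (M1)--(M2), the integrand of $N$ has entries of size $O(h_T)$; hence $|N|\lesssim h_T$ and $\det N\lesssim h_T^2$. The pointwise orientation identity $\det\nabla_{\hat x}\bm\phi=\det\nabla\underline{\bm u}_h\cdot\det J_{F_T}\cong h_T^2>0$ (again (C1) and (M1)--(M2)) records that $N$ is a $\hat b$-weighted average of Jacobians each nondegenerate and of the correct size.

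For the lower bound I would first isolate the leading term. Expanding the $Q_2$ map $F_T=\sum_i a_i\hat N_i$ in its nodal basis and integrating the shape functions against $\hat b$ gives, by the symmetry of the reference integrals, $\int_{\hat T}\hat b\,\partial_{\hat x_1}F_T\,\mathrm d\hat{\bm x}=c_0\,\bm l_1$ and $\int_{\hat T}\hat b\,\partial_{\hat x_2}F_T\,\mathrm d\hat{\bm x}=c_0\,\bm l_2$ for a universal constant $c_0>0$, with $\bm l_1,\bm l_2$ exactly the combinations of (M2); hence $\det\big(\int_{\hat T}\hat b\,J_{F_T}\,\mathrm d\hat{\bm x}\big)=c_0^2\,\bm l_1\wedge\bm l_2\cong h_T^2$. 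Writing $\nabla\underline{\bm u}_h\comp F_T=U_0+R$ with $U_0$ the $\hat b$-weighted mean (so $\int_{\hat T}\hat b\,R\,\mathrm d\hat{\bm x}=0$ and $\det U_0\cong1$ by (C1)) and $J_{F_T}=J_0+\tilde J$ with $J_0$ its $\hat b$-weighted mean, the two cross terms vanish and one obtains
\[
N=\Big(\int_{\hat T}\hat b\,\mathrm d\hat{\bm x}\Big)\,U_0 J_0+\int_{\hat T}\hat b\,R\,\tilde J\,\mathrm d\hat{\bm x},
\]
whose first term has determinant $\det U_0\cdot c_0^2\,\bm l_1\wedge\bm l_2\cong h_T^2$.

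The main obstacle is the remainder $\int_{\hat T}\hat b\,R\,\tilde J$. Condition (C2) and the scaling give $\|R\|_{0,\infty,\hat T}\lesssim h_T|\underline{\bm u}_h|_{2,\infty,T}\le\underline C$, while $|\tilde J|\lesssim h_T$, so this remainder is $O(h_T)$ — the \emph{same} order as the leading term — and a naive perturbation estimate does not exclude cancellation in $\det N$ (indeed, averages of rotated Jacobians can degenerate once the gradient rotates by too large an angle across $T$). The delicate point is that (C1) fixes the orientation $\det\nabla\underline{\bm u}_h>0$ and the condition number of $\nabla\underline{\bm u}_h$, while (C2) caps the variation of $\nabla\underline{\bm u}_h$ over $T$, so that the family $\{\nabla_{\hat x}\bm\phi(\hat{\bm x})\}_{\hat{\bm x}\in\hat T}$ remains within a set of nondegenerate matrices of fixed orientation and bounded spread, whose $\hat b$-weighted average cannot be singular. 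I would make this rigorous by a compactness/contradiction argument on the \emph{fixed} reference element: the rescaled data $\nabla\underline{\bm u}_h\comp F_T$ and $J_{F_T}/h_T$ lie in finite-dimensional spaces and are uniformly bounded by (C1), (C2) and (M1)--(M2), so any sequence with $h_T^{-2}\det N\to0$ would admit a subsequential limit violating the pointwise orientation identity together with the nondegeneracy enforced by (C1)--(C2) and (M2) — a contradiction. This yields $\det N\ge c\,h_T^2$ with a constant depending only on $\sigma$, the bounds in (C1), $\underline C$, and the shape constant in (M2), completing the proof.
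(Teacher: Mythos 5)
Your reduction to $\det N\cong h_T^2$ for $N=\int_{\hat T}\hat b\,\nabla_{\hat x}\underline{\hat{\bm u}}_h\,\mathrm{d}\hat{\bm x}$ (via linearity of $\operatorname{cof}$ in 2D) and your upper bound are correct, and your instinct that the whole difficulty is a possible cancellation in the lower bound is exactly right. But your route diverges from the paper's at the decisive point. The paper never factors $\nabla_{\hat x}\underline{\hat{\bm u}}_h=(\nabla\underline{\bm u}_h\circ F_T)J_{F_T}$: it uses that $\underline{\hat{\bm u}}_h$ itself lies in $Q_2(\hat T)^2$, so each entry of the matrix is an \emph{exact} linear combination of nodal values (see \eqref{hat_u_11}--\eqref{hat_u_22}), and then Taylor-expands those nodal values about the element center $a_8$. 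Its leading term is $(4/45)^2\det\nabla\underline{\bm u}_h(a_8)\,(\bm{l}_1\wedge\bm{l}_2)$, in which the determinant is evaluated at a \emph{single point} and hence is controlled by (C1), while $\bm{l}_1\wedge\bm{l}_2\gtrsim h_T^2$ by (M2). Your decomposition instead produces the leading term $\bigl(\int_{\hat T}\hat b\,\mathrm{d}\hat{\bm x}\bigr)^2\det U_0\det J_0$ with $U_0$ a $\hat b$-weighted \emph{average} of $\nabla\underline{\bm u}_h$, and the claim ``$\det U_0\cong1$ by (C1)'' is unjustified: (C1) is a pointwise condition, and the determinant of an average of pointwise nondegenerate matrices admits no lower bound in general (the average of the rotations $R(\pi/2)$ and $R(-\pi/2)$ is the zero matrix). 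Separately, your identity $\int_{\hat T}\hat b\,\partial_{\hat x_1}F_T\,\mathrm{d}\hat{\bm x}=c_0\bm{l}_1$ presumes $F_T$ is biquadratic, whereas the paper allows bilinear and trigonometric parametrizations \eqref{F_T}; only the pulled-back deformation, not $F_T$, is known to be in $Q_2$.

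The fatal gap is the closing compactness argument, which is circular: compactness can only upgrade ``every fixed admissible configuration has $\det N>0$'' to a uniform bound, and that per-configuration positivity is precisely what has to be proved; you assert, rather than prove, that a limit with $\det N=0$ would violate (C1)--(C2) and (M2). In fact it need not. Reading (C1) as the paper does (the $\lambda_i$ are the principal strains, i.e.\ singular values --- the reading its other lemmas require), take a uniform square element with $F_T(\hat{\bm x})=\bm{x}_T+h\hat{\bm x}$ and $\underline{\bm u}_h|_T=(h\hat{\bm w})\circ F_T^{-1}$, where
\begin{equation*}
\hat{\bm w}(\hat{\bm x})=\Bigl(\hat x_1-5\hat x_1\hat x_2^2+6\hat x_2^2,\;\hat x_2\bigl(\tfrac{6}{5}-\hat x_1\bigr)\Bigr)\in Q_2(\hat T;\mathbb{R}^2).
\end{equation*}
Then $\nabla\underline{\bm u}_h=\nabla_{\hat x}\hat{\bm w}$ pointwise, $\det\nabla_{\hat x}\hat{\bm w}=(\tfrac{6}{5}-\hat x_1)+\hat x_2^2(6-5\hat x_1)\in[\tfrac15,\tfrac{66}{5}]$, all singular values lie in a fixed interval $[\sigma,1/\sigma]$, and $h_T|\underline{\bm u}_h|_{2,\infty,T}$ is a fixed constant, so the element-level hypotheses (C1)--(C2), (M1)--(M2) used by either proof all hold with $h$-independent constants; yet $\int_{\hat T}\hat b\,\partial_{\hat x_1}\hat w_1\,\mathrm{d}\hat{\bm x}=\tfrac43\int_{-1}^1(1-s^2)(1-5s^2)\,\mathrm{d}s=0$ and $\int_{\hat T}\hat b\,\partial_{\hat x_2}\hat w_1\,\mathrm{d}\hat{\bm x}=0$ (odd integrand), so the first row of $N$ vanishes and the determinant in \eqref{det int b cof nabla u =h^2} is exactly $0$, not $\cong h_T^2$. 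Hence there is no contradiction for your sequence to run into, and no compactness argument can close the gap: a lower bound of the asserted kind additionally requires a quantitative smallness of the oscillation constant $\underline C$ in (C2) relative to $c$ in (C1) and the constant in (M2). This is the same cancellation you flagged; the paper's exact expansion at least localizes it to an explicit comparison between its leading term and its $C_{ij}h_T^2$ remainders (a comparison it performs silently), whereas your argument buries it inside a claim that is not provable from the stated hypotheses.
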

\begin{proof}
    Recall $\underline{\bm{u}}_h = (u_1, u_2)$, and $\hat{u}_{ij} = \sum_{k=0}^8 u_i( a_k)\frac{\partial \varphi_k}{\partial\hat{x}_j}$ on $\hat{T}$, $1\le i,j \le 2$, where $\varphi_k$ are bi-quadratic basis functions. Rewrite $\int_{\hat{T}}\hat{b}\operatorname{cof}\nabla_{\hat{x}}\hat{\underline{\bm{u}}}\,\mathrm{d}\hat{\bm{x}}$ as
\begin{equation}
\int_{\hat{T}}\hat{b}\operatorname{cof}\nabla_{\hat{x}}\underline{\bm{\hat{u}}}_h\,\mathrm{d}\hat{\bm{x}} =
\left(
\begin{aligned}
&\int_{\hat{T}}\hat{b}\hat{u}_{22}\,\mathrm{d}\hat{\bm{x}} &-\int_{\hat{T}}\hat{b}\hat{u}_{21}\,\mathrm{d}\hat{\bm{x}}\\
&-\int_{\hat{T}}\hat{b}\hat{u}_{12}\,\mathrm{d}\hat{\bm{x}}& \int_{\hat{T}}\hat{b}\hat{u}_{11}\,\mathrm{d}\hat{\bm{x}}\\
\end{aligned}
\right).
\end{equation}
By Taylor expanding $u_1(a_k)$ at $a_8$, $k = 0,1,\cdots,7$ and direct calculations, we have
\begin{equation}\label{hat_u_11}
\begin{aligned}
\int_{\hat{T}}\hat{b}\hat{u}_{11}\,\mathrm{d}\hat{\bm{x}} &= \dfrac{4}{45}(u_1( a_1)-u_1( a_0)
+ u_1( a_2) - u_1( a_3)) + \dfrac{32}{45}(u_1( a_5)-u_1( a_7))\\
& = \dfrac{\partial u_1}{\partial x_1}(a_8)\left(\dfrac{4}{45}(x_{a_1}-x_{a_0} + x_{a_2}
- x_{a_3})+\dfrac{32}{45}(x_{a_5}-x_{a_7})\right)\\
&\quad + \dfrac{\partial u_1}{\partial x_2}(a_8)\left(\dfrac{4}{45}(y_{a_1}-y_{a_0} + y_{a_2}
- y_{a_3})+\dfrac{32}{45}(y_{a_5}-y_{a_7})\right) + C_{11}h_T^2\\
&= \dfrac{4}{45} \nabla u_1(a_8)\cdot \left(\vv{a_0a_1} + \vv{a_3a_2} + 8 \vv{a_7a_5}\right)
+ C_{11}h_T^2.\\
\end{aligned}
\end{equation}
where $ a_k = (x_{a_k},y_{a_k})$, $k=0,\cdots,8$. Similarly, we have
\begin{equation}\label{hat_u_12}
\begin{aligned}
\int_{\hat{T}}\hat{b}\hat{u}_{12}\,\mathrm{d}\hat{\bm{x}} &= \dfrac{4}{45}(u_1(a_3) - u_1(a_0)
+ u_1(a_2) - u_1(a_1)) +\dfrac{32}{45}(u_1(a_6)-u_1(a_4))\\
&= \dfrac{4}{45}\nabla u_1(a_8)\cdot\left(\vv{a_0a_3} + \vv{a_1a_2} + 8\vv{a_4a_6}\right)
+ C_{12}h_T^2,
\end{aligned}
\end{equation}
\begin{equation}\label{hat_u_21}
\begin{aligned}
\int_{\hat{T}}\hat{b}\hat{u}_{21} \,\mathrm{d}\hat{\bm{x}}& = \dfrac{4}{45}(u_2(a_1)-u_2(a_0)
+ u_2(a_2) - u_2(a_3)) + \dfrac{32}{45}(u_2(a_5)-u_2(a_7))\\
&= \dfrac{4}{45}\nabla u_2(a_8)\cdot \left(\vv{a_0a_1} + \vv{a_3a_2} + 8 \vv{a_7a_5}\right)
+ C_{21}h_T^2,
\end{aligned}
\end{equation}
\begin{equation}\label{hat_u_22}
\begin{aligned}
\int_{\hat{T}}\hat{b}\hat{u}_{22}\,\mathrm{d}\hat{\bm{x}} &= \dfrac{4}{45}(u_2(a_3) - u_2(a_0)
+ u_2(a_2) - u_2(a_1)) +\dfrac{32}{45}(u_2(a_6)-u_2(a_4))\\
&= \dfrac{4}{45}\nabla u_2(a_8)\cdot\left(\vv{a_0a_3} + \vv{a_1a_2} + 8\vv{a_4a_6}\right)
+ C_{22}h_T^2.
\end{aligned}
\end{equation}

Thus, by the mesh and deformation regularity conditions (M1)-(M2) and (C1)-(C2),
and noticing that $C_{ij}$ of the $h_T^2$ terms in \eqref{hat_u_11}-\eqref{hat_u_22} are
linear combinations of $\partial^2 u_i/\partial x_1^2$, $\partial^2 u_i/\partial x_2^2$
and $\partial^2 u_i/\partial x_1x_2$ with uniformly bounded coefficients, we are led to
\begin{equation*}
\begin{aligned}
 \det\int_{\hat{T}}\hat{b}\operatorname{cof}\nabla_{\hat{x}}\hat{\underline{\bm{u}}}_h
 \,\mathrm{d}\hat{\bm{x}} &=
 \int_{\hat{T}}\hat{b}\hat{u}_{11}\,\mathrm{d}\hat{\bm{x}}\int_{\hat{T}}\hat{b}\hat{u}_{22}
 \,\mathrm{d}\hat{\bm{x}} - \int_{\hat{T}}\hat{b}\hat{u}_{21}\,\mathrm{d}\hat{\bm{x}}
 \int_{\hat{T}}\hat{b}\hat{u}_{12}\,\mathrm{d}\hat{\bm{x}}\\
& \cong \Big[\left(\dfrac{4}{45}\nabla u_1(a_8)\cdot \bm{l}_1\right)
\left(\dfrac{4}{45}\nabla u_2(a_8)\cdot \bm{l}_2\right) \\
&\quad -\left(\dfrac{4}{45}\nabla u_1(a_8)\cdot \bm{l}_2\right)
\left(\dfrac{4}{45}\nabla u_2(a_8) \cdot \bm{l}_1\right)\Big] + O(h_T^2)\\
& \cong \det\nabla u_h(a_8) h_T^2 + O(h_T^2)\cong h_T^2,
\end{aligned}
\end{equation*}
where $\bm{l}_1 = (\vv{a_0a_1} +\vv{a_3a_2} + 8\vv{a_7a_5})$ and
$\bm{l}_2 = (\vv{a_0a_3} +\vv{a_1a_2} + 8\vv{a_4a_6})$. This proves \eqref{det int b cof nabla u =h^2}.
\end{proof}

\begin{Theorem} Suppose the hypothesis (H), the conditions (M1)-(M2) and (C1)-(C2) hold.
Then, there exists a constant $\beta>0$ independent of $h$ such that
$b(\bm{v}_h,q_h;\underline{\bm{u}}_h)$ satisfies the LBB condition \eqref{LBB condition}.
\end{Theorem}

\begin{proof}
Set $\Pi_h=\Pi_1+\Pi_2(I-\Pi_1)$. Then, by Lemma~\ref{Fortin Criterion},
Lemma~\ref{2 steps construction} (see \eqref{operator construction}), and
\eqref{operator2 construction}-\eqref{operator1 property}, what remains
for us to show is that $\Vert \Pi_2(I-\Pi_1)\bm{v}\Vert _{1,2,\Omega }\le c_2 \Vert
\bm{v}\Vert _{1,2,\Omega }$, $\forall \bm{v}\in \mathcal{X}_0$.

Since $\Pi_2\bm{v}$ is a bubble function on $T \in \mathscr{T}_h$ and
$\operatorname{div}(\operatorname{cof}\nabla\bm{\underline{u}}_h|_T)=0$,
by the integral by parts and the change of integral variables,
equation \eqref{operator2 construction} can be rewritten as
\begin{equation}
\label{rewrite pi2 hat}
\int_{\hat{T}}\widehat{\Pi_2\bm{v}}\cdot \big(\operatorname{cof}\nabla_{\hat{x}}
\underline{\bm{\hat{u}}}_h\nabla_{\hat{x}}\hat{q}_h\big) \, \mathrm{d}\bm{\hat{x}}
= - \int_{\hat{T}} \hat{q}_h \operatorname{cof}\nabla_{\hat{x}} \underline{\bm{\hat{u}}}_h:
\nabla_{\hat{x}}\bm{\hat{v}} \,\mathrm{d}\bm{\hat{x}}, \;\;
\forall \hat{q}_h\in P_1(\hat{T})\setminus P_0(\hat{T}).
\end{equation}
where $\nabla_{\hat{x}}:=(\partial_{\hat{x}_1}, \partial_{\hat{x}_2})$ is the gradient operator.
By solving the linear system, we can write $\widehat{\Pi_2\bm{v}}(\hat{\bm{x}})$ explicitly as
$\widehat{\Pi_2\bm{v}}(\hat{\bm{x}})= (\alpha_1(1-\hat{x}_1^2)(1-\hat{x}_2^2),
\alpha_2(1-\hat{x}_1^2)(1-\hat{x}_2^2))$ with
\begin{equation}
\label{bm alpha}
\bm{\alpha} = \begin{pmatrix} \alpha_1 \\ \alpha_2 \end{pmatrix} =
- \left(\int_{\hat{T}}\hat{b}\operatorname{cof}\nabla_{\hat{x}}\underline{\hat{\bm{u}}}_h
\,\mathrm{d}\bm{\hat{x}}\right)^{-1} \begin{pmatrix} \int_{\hat{T}}
\operatorname{cof}\nabla_{\hat{x}}\underline{\hat{\bm{u}}}_h: \nabla_{\hat{x}}\hat{\bm{v}}
\ \hat{x}_1\,\mathrm{d}\bm{\hat{x}} \\
\int_{\hat{T}}\operatorname{cof}\nabla_{\hat{x}}\underline{\hat{\bm{u}}}_h:
\nabla_{\hat{x}}\hat{\bm{v}}\ \hat{x}_2\,\mathrm{d}\bm{\hat{x}},
\end{pmatrix}
\end{equation}
where $\hat{b}(\bm{\hat{x}})=(1-\hat{x}_1^2)(1-\hat{x}_2^2)$.
By (C1) and the H\"{o}lder inequality,
\begin{equation}
\Big|\int_{\hat{T}}\operatorname{cof}\nabla_{\hat{x}}\underline{\hat{\bm{u}}}_h:
\nabla_{\hat{x}}\bm{\hat{v}}\ \hat{x}_i\,\mathrm{d}\bm{\hat{x}}\Big|
\lesssim \dfrac{h_T}{\sigma}|\hat{\bm{v}}|_{1,2,\hat{T}}\|\hat{x}_i\|_{0,2,T}
\lesssim \dfrac{h_T}{\sigma}|\hat{\bm{v}}|_{1,2,T},\quad i=1,2.
\end{equation}
On the other hand, by (C1) and Lemma~\ref{int_det},
\begin{equation}
\label{coefficient matrix}
\Big|\big(\int_{\hat{T}}\hat{b}\operatorname{cof}\nabla_{\hat{x}}\underline{\hat{\bm{u}}}_h
\, \mathrm{d}\bm{\hat{x}}\big)^{-1}\Big|\cong h_T^{-2}\Big|\int_{\hat{T}}\hat{b}
\nabla_{\hat{x}}\underline{\hat{\bm{u}}}_h\,\mathrm{d}\bm{\hat{x}}\Big|
\lesssim h_T^{-2} \|\hat{b}\|_{0,2,\hat{T}}\|\nabla_{\hat{x}}\underline{\hat{\bm{u}}}_h
\|_{0,2,\hat{T}}\lesssim \dfrac{1}{\sigma h_T}.
\end{equation}
As a consequence of \eqref{bm alpha}-\eqref{coefficient matrix} and the standard scaling argument,
we are led to
\begin{equation}
\label{pi2 bubble}
|\Pi_2\bm{v}|_{1,2,T}\cong |\widehat{\Pi}_2\bm{v}|_{1,2,\hat{T}}\cong |\bm{\alpha}|\lesssim
\dfrac{1}{\sigma^2}|\hat{\bm{v}}|_{1,2,\hat{T}}\cong \dfrac{1}{\sigma^2}|\bm{v}|_{1,2,T}.
\end{equation}
Finally, by \eqref{operator1 property}, \eqref{pi2 bubble} and
the Poincar\'{e} inequality, we obtain
\begin{equation}
\label{beta magnitude}
\|\Pi_2(I-\Pi_1)\bm{v}\|_{1,2,\Omega }\lesssim \dfrac{1}{\sigma^2}\|(I-\Pi_1)\bm{v}
\|_{1,2,\Omega } \lesssim\dfrac{1}{\sigma^4}\|\bm{v}\|_{1,2,\Omega },
\  \forall \bm{v}\in \mathcal{X}_0,
\end{equation}
and complete the proof of the theorem.
\end{proof}

\section{Numerical experiments and results}

In this section, we apply a specific DP-Q2-P1 method to a typical cavitation problem in
incompressible nonlinear elasticity. As is well known that cavitation refers to a commonly
observed phenomenon in elastomers, in which small voids enlarge by one or more orders of
magnitude when subject to sufficiently large tensile stresses. The numerical computation
of cavitation is difficult because the extremely large anisotropic deformation near the
cavity surface in the form of increasingly severe compression in the radial direction
and correspondingly large stretches in circumferential one, which can often cause mesh tangle
and other approximation problems \cite{SuLiRectan, Lian Dual, Xu and Henao 2011}.

Let $\Omega = B_1(\bm{0})\setminus B_{\rho}(\bm{0})$ be the reference configuration,
where $\rho$ is the radius of the pre-existing defect, let $\partial_N\Omega = \partial\Omega$.
Let the strain-energy density
function be given by
$$
W(\nabla\bm{u}) = \frac{\mu}{2}|\nabla\bm{u}|^s + \dfrac{1}{2}(\det\nabla\bm{u}-1)^2 +
\dfrac{1}{\det\nabla\bm{u}},\qquad 1<s<2.
$$

Consider the transformation $F_T:\hat{T}\to \mathbb{R}^2$ of the form
\begin{align}\label{F_T}
\begin{cases}
R=R_0+\dfrac{\hat{x}_1 +1}{2}(R_1-R_0), \\
\theta=\theta_0+\dfrac{\hat{x}_2 +1}{2}(\theta_3-\theta_0), \\
x_1=R\cos\theta, \ x_2=R\sin\theta.
\end{cases}
\end{align}
Then a typical mesh $\mathscr{T}_h$ consisting of well defined circular ring sector elements
on $\Omega = B_1(\bm{0})\setminus B_\rho(\bm{0})$ is shown in Figure~\ref{typical mesh},
where we have $N=8$ evenly spaced elements in each of the 3 circular ring layers. A typical
circular ring sector element $T$, in a prescribed circular ring with inner radius $\rho_T$ and
thickness $\tau_T$, is shown in Figure~\ref{typical element}.
\begin{figure}[H]
\hspace*{-2.5mm}
	\begin{minipage}[l]{0.5\textwidth}
    \centering    \includegraphics[width=2.4in]{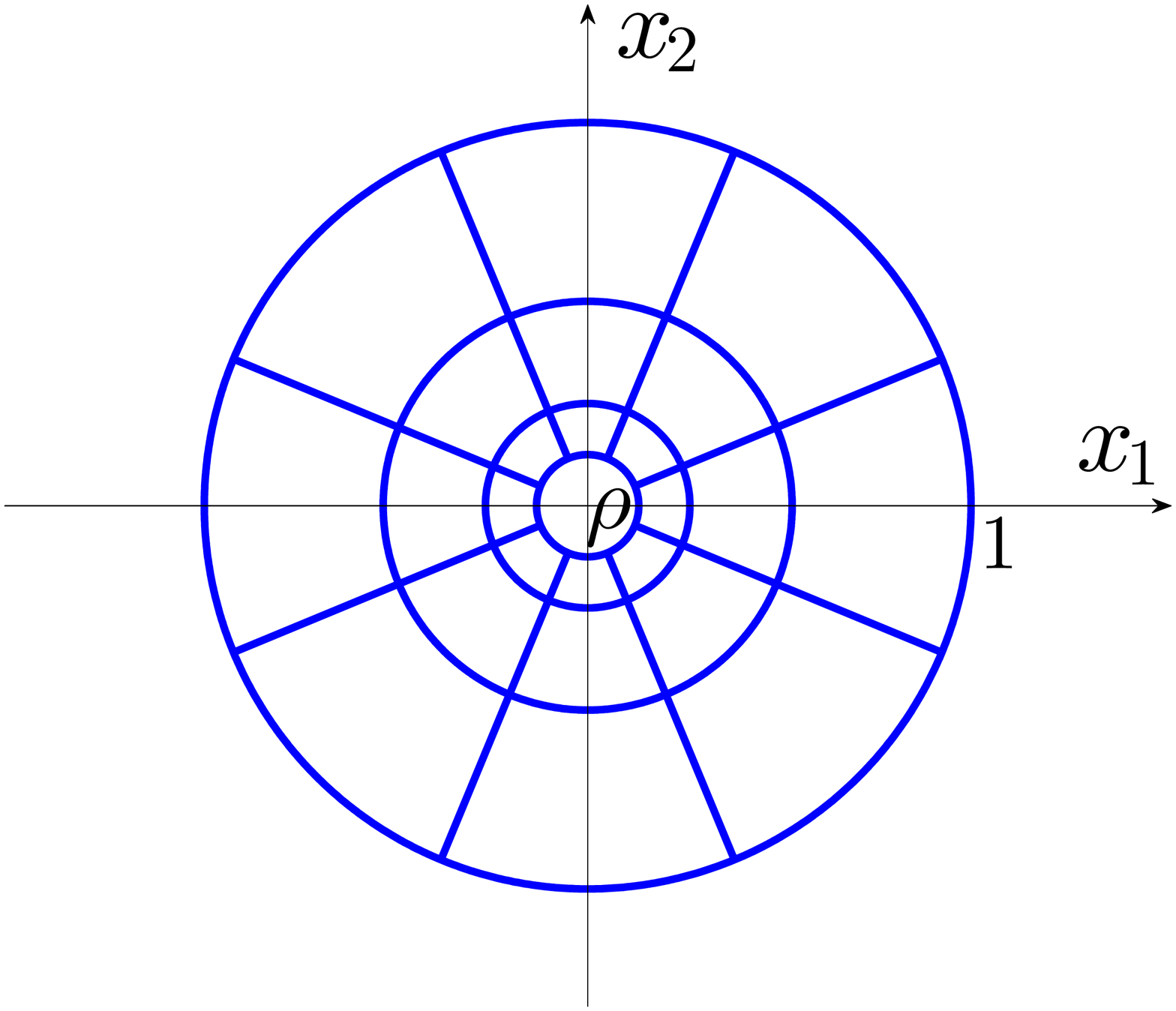}
	\vspace*{-10mm}
	\caption{A typical mesh $\mathscr{T}_h$ with $N=8$.}\label{typical mesh}
    \end{minipage}\hspace*{-.5mm}
    \begin{minipage}[l]{0.5\textwidth}
    \centering   \includegraphics[width=2.4in]{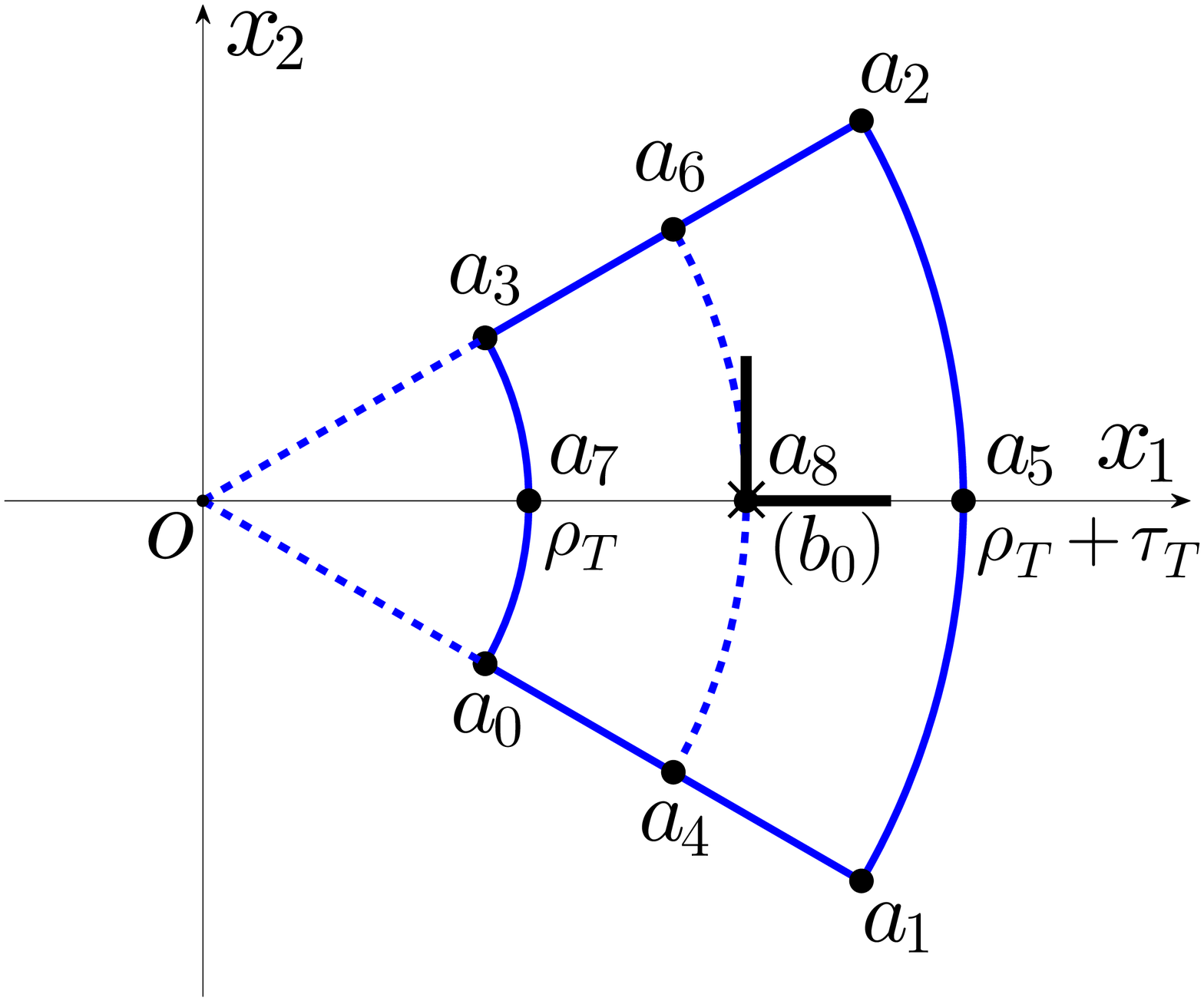}
    \vspace*{-10mm}
    \caption{A circular ring sector element $T$.}\label{typical element}
    \end{minipage}
\end{figure}

In our numerical experiments, the number of elements $N$ in the circular ring layers and
the thickness $\tau_T$ of each layer are determined by a meshing strategy based on an energy
equi-distribution principle established in \cite{SuLiRectan}. For given $\rho >0$, the meshes
so produced satisfy the mesh regularity conditions (M1) and (M2).
Table 1 shows two sets of typical meshes produced by the meshing strategy.
For the constant $\sigma$ in (C1), we set $\sigma = \rho/C_{max}$, where
$C_{max}$ is an upper bound for the expected grown cavity radius.
In our numerical experiments, we set $\sigma = \rho/2$, {\em i.e.} $C_{max}=2$.

\begin{table}[H]{\footnotesize
\centering \subtable[$\rho=0.01$.]{
\begin{tabular}{|c|c|c|c|c|}
\hline
$h$     & $\min\tau_T$  &  $\max\tau_T$  &  layers  & $N$    \\      \hline
0.05    & 0.0300        &  0.1900        &   8      &  20    \\      \hline
0.04    & 0.0224        &  0.1376        &   11     &  26    \\      \hline
0.03    & 0.0156        &  0.1164        &   14     &  34    \\      \hline
0.02    & 0.0096        &  0.0736        &   22     &  50    \\      \hline
\end{tabular}
}
\qquad
\subtable[$\rho=0.0001$.]{
\begin{tabular}{|c|c|c|c|c|}
\hline
$h$     & $\min\tau_T$  &  $\max\tau_T$  & layers  & $N$    \\      \hline
0.05    & 0.0120        &  0.1720        &   9     &  24    \\      \hline
0.04    & 0.0080        &  0.1360        &   12    &  28    \\      \hline
0.03    & 0.0048        &  0.1056        &   16    &  38    \\      \hline
0.02    & 0.0024        &  0.0728        &   22    &  56    \\      \hline
\end{tabular}
}\caption{Data of two sets of typical meshes produced.}\label{table1}
}
\end{table}

\subsection{Radially symmetric case}

In our numerical experiments, we take $\bm{u}(\bm{x};\lambda)=\frac{\sqrt{R^2+\lambda^2-1}}{R}\bm{x}$ with
$\lambda>1$ as the analytical cavitation solution to the radially symmetric
dead-load traction problem with
\begin{equation}\label{symmetric traction}
\bm{t}(\bm{x})=t\bm{n}(\bm{x}), \;\;  \forall \bm{x}\in \partial B_1(\bm{0}), \quad\; \text{and}
\quad\; \bm{t}(\bm{x})=\bm{0}, \;\;  \forall \bm{x}\in \partial B_{\rho}(\bm{0}),
\end{equation}
where $\bm{n}$ is the unit outward normal to $\partial B_1(\bm{0})$, and
$t=t_{\rho,\lambda}$ is uniquely determined by $\rho$ and $\lambda$ \cite{Ball82}.
For example $t_{0.1,2}\approx 3.00487$, $t_{0.01,2}\approx 3.94237$, $t_{0.0001,2}\approx 4.21590$.

\begin{figure}[htb]
\centering \subfigure[Energy error $\Delta E=|E(\bm{u}_h)-E(\bm{u})|$.]{
\label{Energy error}
    \includegraphics[width=2.7in, height=2in]{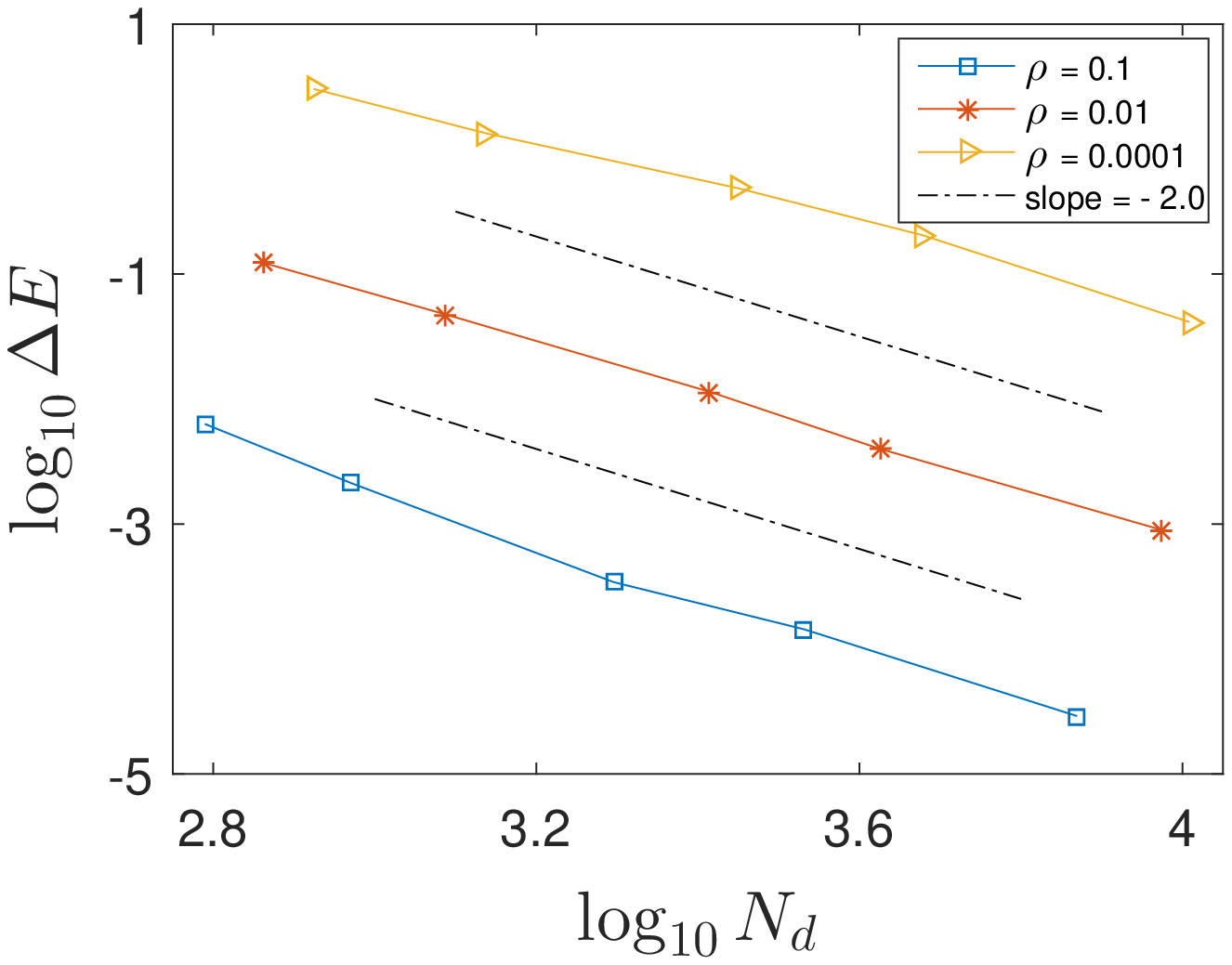}
} \hspace{1mm}
\centering \subfigure[Error in $W^{1,s}$-seminorm $|\bm{u}_h-\bm{u}|_{1,s,\Omega }$.]{
\label{W1p error}
    \includegraphics[width=2.75in, height=2in]{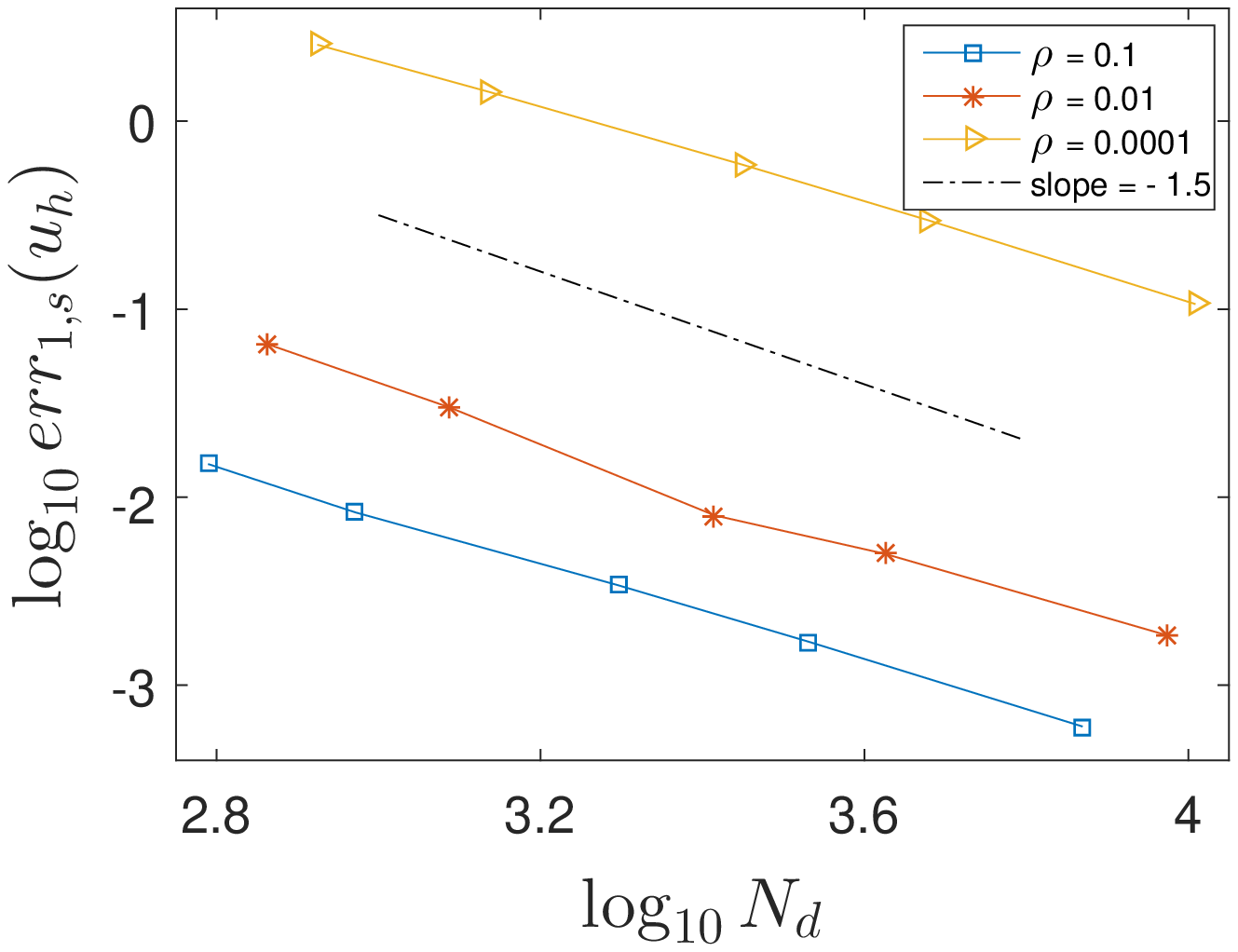}
}
  \caption{Convergence behavior of the energy and deformation in symmetric case.}
\label{Energy error and W1p error}
\end{figure}

The convergence behavior of the numerical cavitation solutions with $\lambda=2$ obtained
by the DP-Q2-P1 mixed finite element method is shown in
Fig~\ref{Energy error and W1p error}-Fig~\ref{p L2 error radial},
where $N_d$ is the total degrees of freedom of $\bm{u}_h$.
Fig~\ref{sym N_d - h} shows $N_d$ as a function
of the mesh size $h$ in the radially symmetric case.
It is clearly seen that $N_d \sim h^{-2}$ for our mesh and the convergence rates
obtained by the DP-Q2-P1 cavitation solutions in the radially symmetric case
can reach the optimal rates of the interpolation error estimates,
which were analyzed in \cite{SuLiRectan} (see Theorem 5.2 in \cite{SuLiRectan}).

\begin{figure}[htb]
\centering \subfigure[$L^1$ error of $\det \nabla u_h$.]{ \label{determinant L1 error}
    \includegraphics[width=2.7in, height=2in]{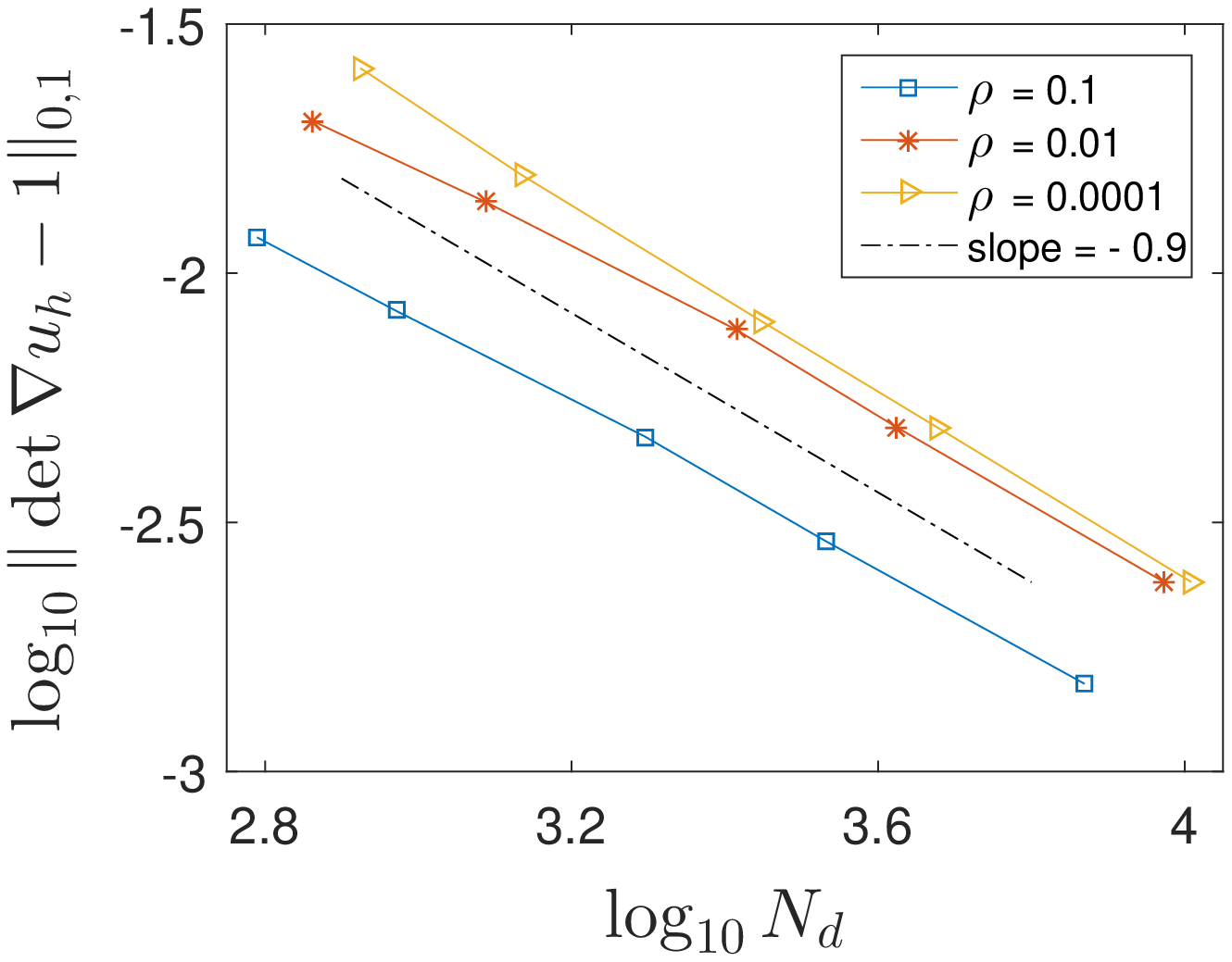}
} \hspace{1mm} \centering \subfigure[$L^2$ error of $\det \nabla u_h$.]{
\label{determinant L2 error}
    \includegraphics[width=2.7in, height=2in]{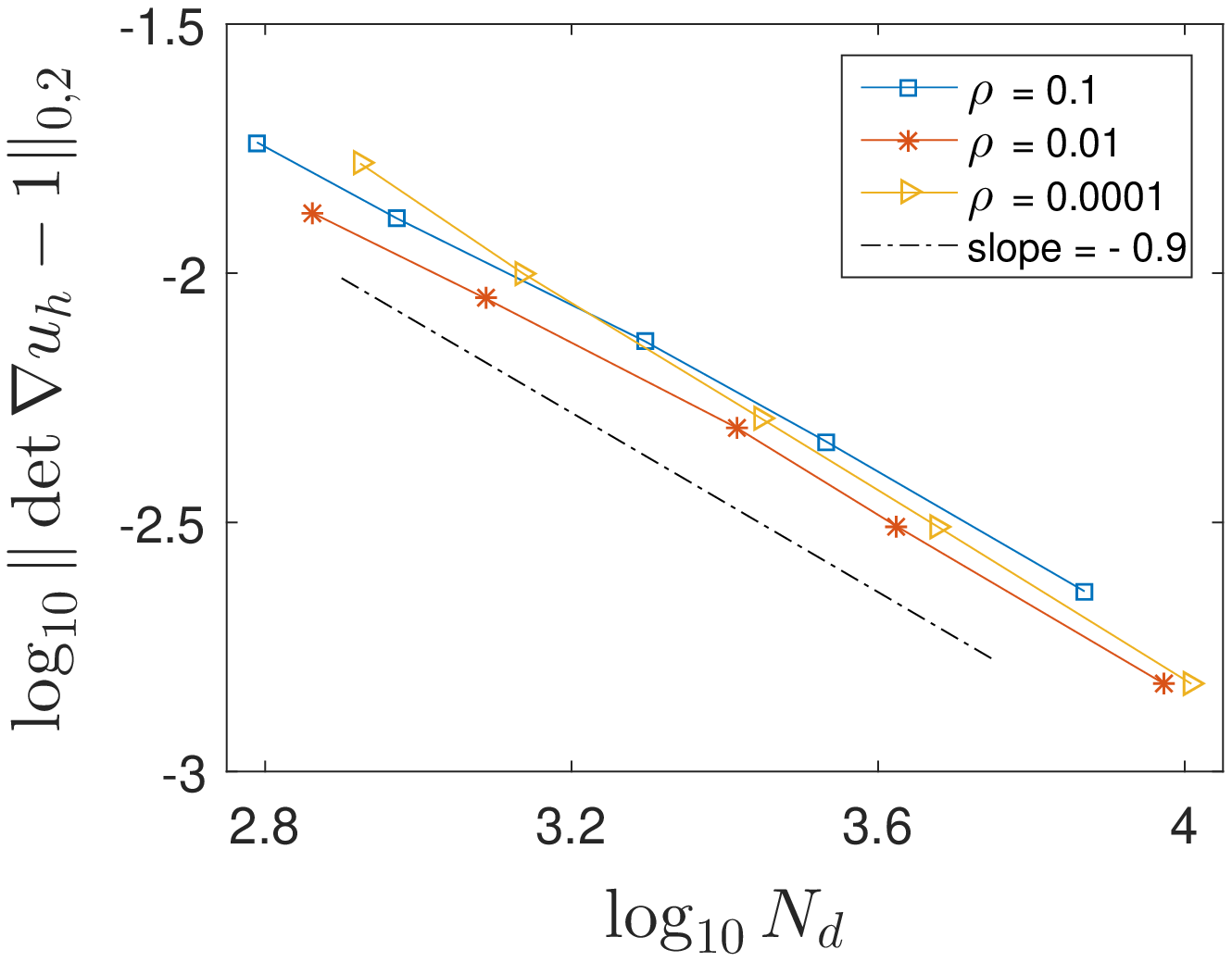}
} \caption{Convergence behavior of $\det \nabla u_h$ in symmetric case.}
\label{convergence det}
\end{figure}

\begin{figure}[htb]
  \begin{minipage}[l]{0.5\textwidth}
\centering
\includegraphics[width=2.7in, height=2in]{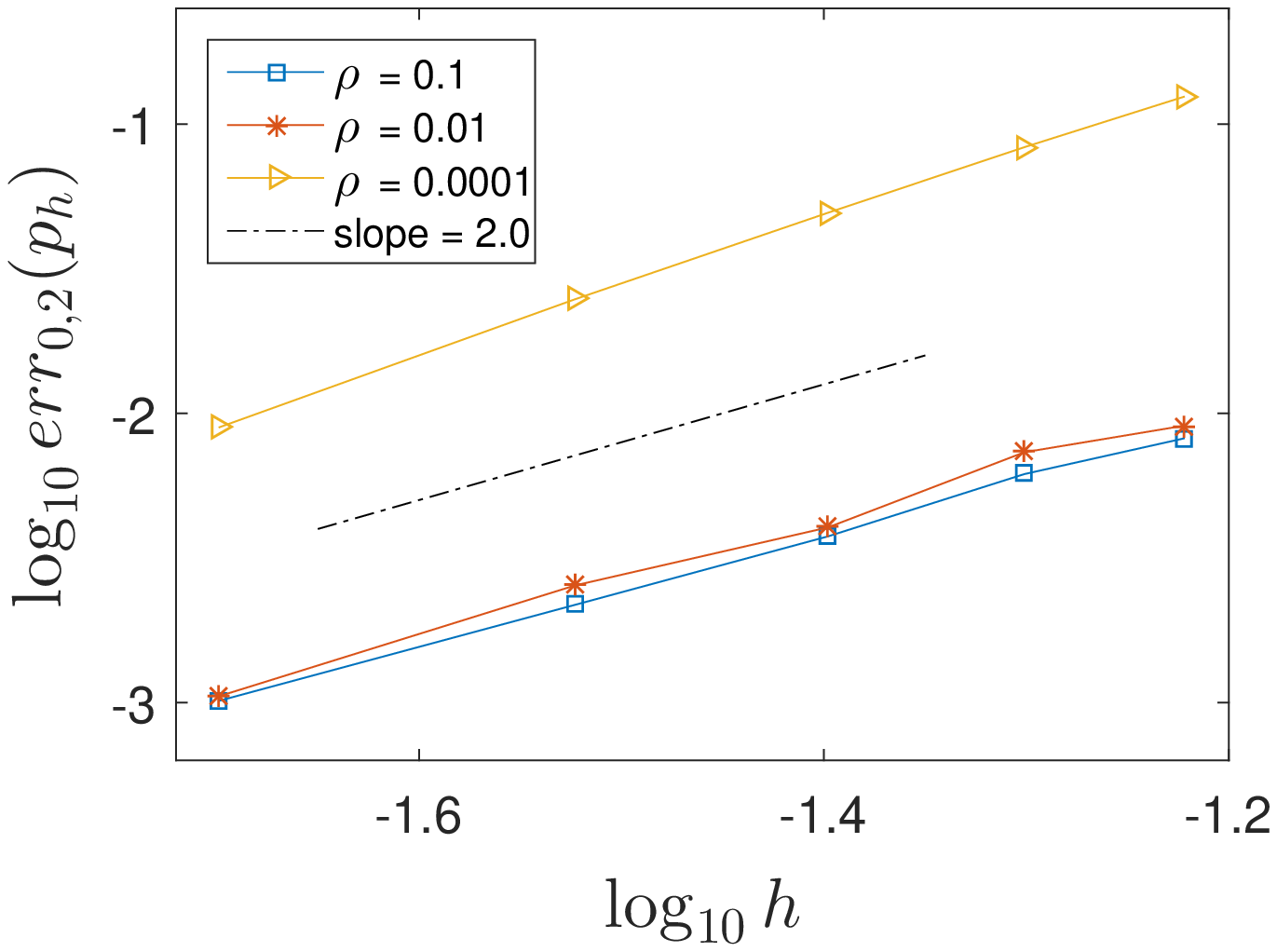}
\vspace*{-3mm}
\caption{Convergence behavior of $p_h$.}
\label{p L2 error radial}
\end{minipage}
  \begin{minipage}[l]{0.5\textwidth}
\centering
\includegraphics[width=2.7in, height=2in]{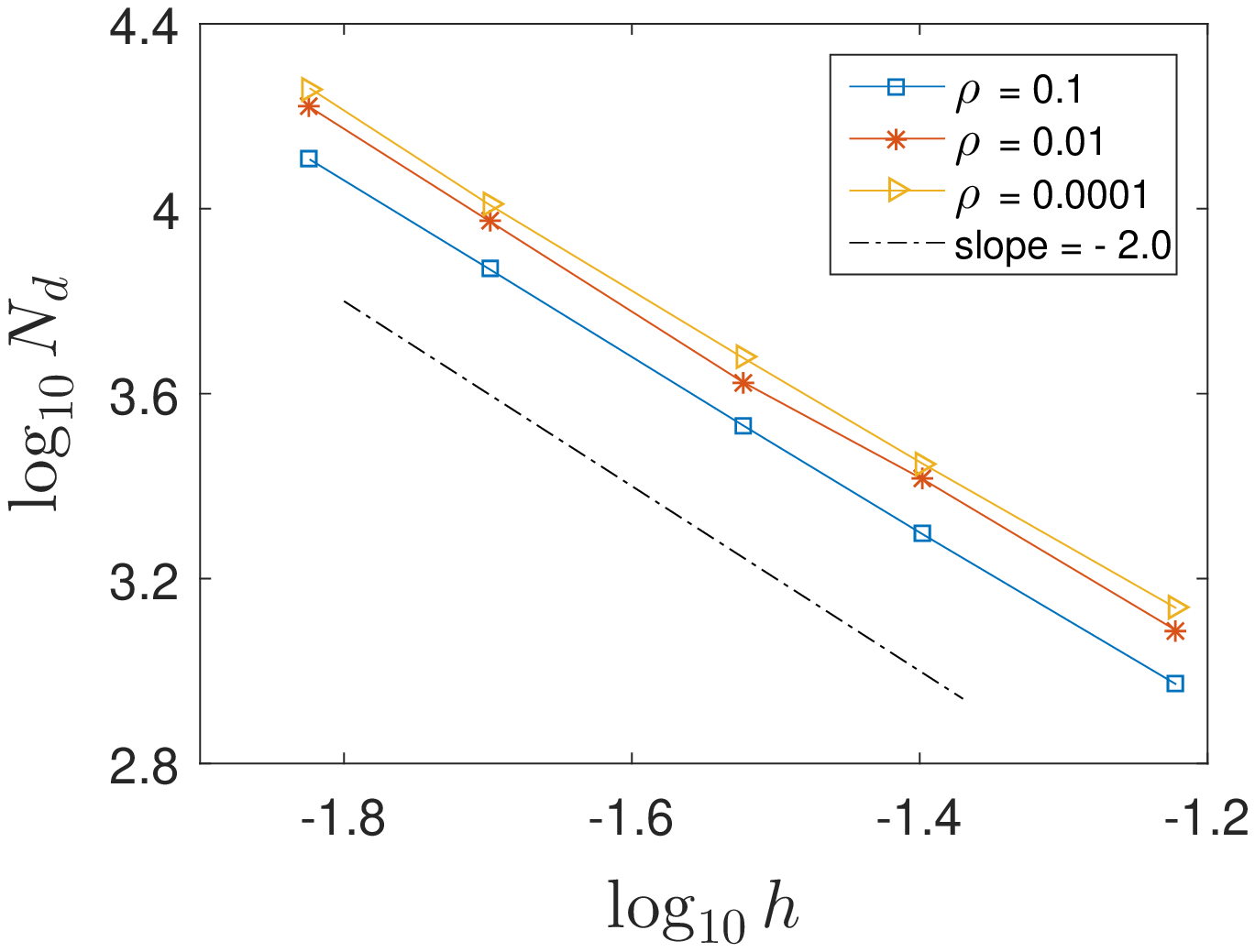}
\vspace*{-3mm}
\caption{$N_d \sim h^{-2}$  in symmetric case.}
\label{sym N_d - h}
\end{minipage}
\end{figure}

\subsection{Non-radially symmetric case}

Consider the non-radially-symmetric dead-load traction problem with
\begin{equation}\label{asymmetric traction}
\bm{t}(\bm{x})=(1+\eta|\cos\theta|) t \bm{n}(\bm{x}), \;\;  \forall \bm{x}\in
\partial B_1(\bm{0}), \quad\; \text{and}
\quad\; \bm{t}(\bm{x})=\bm{0}, \;\;  \forall \bm{x}\in \partial B_{\rho}(\bm{0}),
\end{equation}
where $\theta = \arctan(x_2/x_1)$, $\eta$ and $t$ are parameters.
In our numerical experiments, we take $\eta=1/10$, and
$t=t_{\rho,2}$ as is given in the radially-symmetric
case for various $\rho$.

\begin{figure}[htb]
\centering \subfigure[Energy error $\Delta E=|E(\bm{u}_h)-E(\bm{u})|$.]{
\label{Energy vs Nd nonradial}
    \includegraphics[width=2.7in, height=2in]{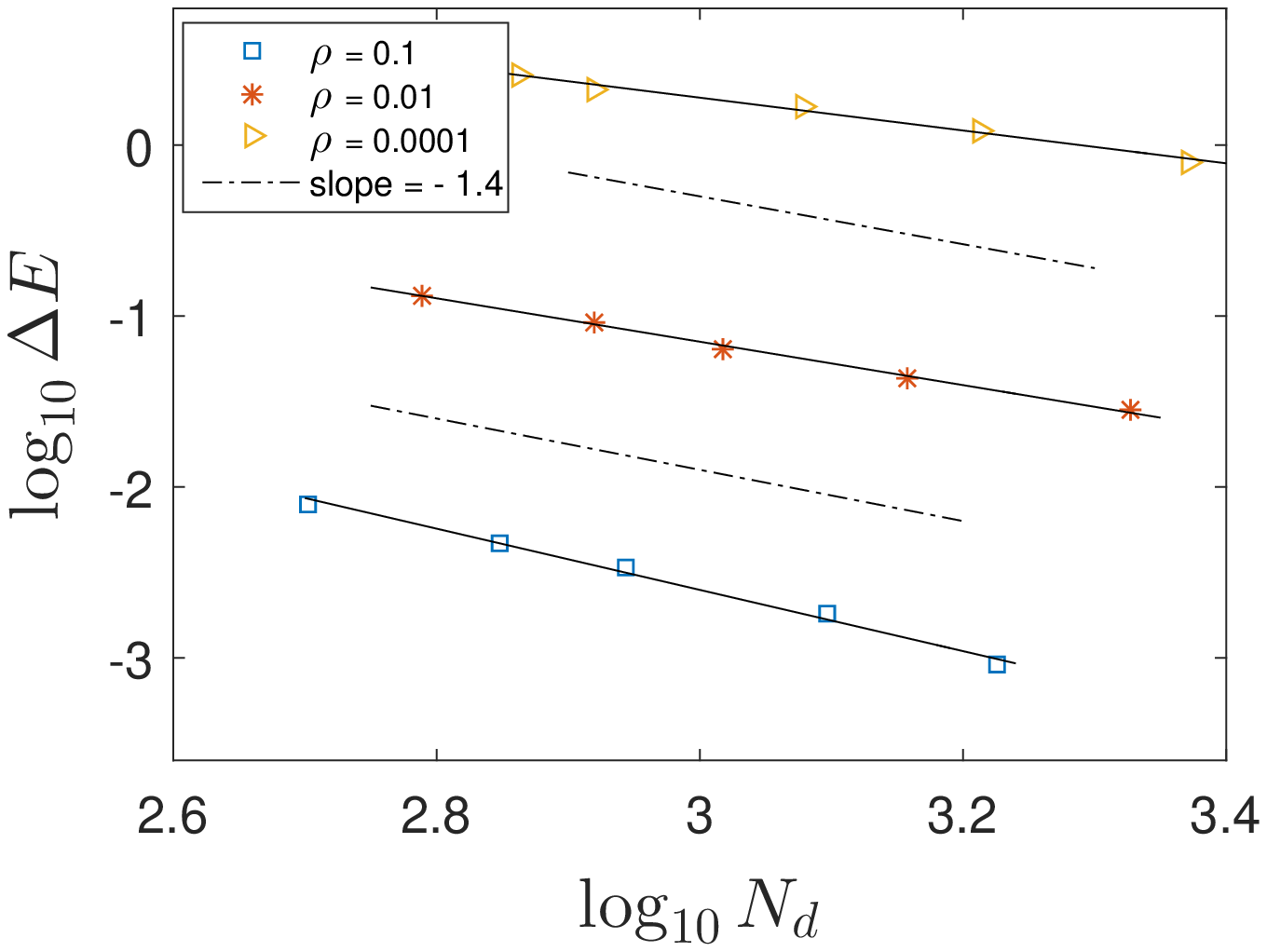}
} \hspace{1mm}
\centering \subfigure[Error in $W^{1,s}$-seminorm $|\bm{u}_h-\bm{u}|_{1,s,\Omega }$.]{
\label{FES error nonradial}
    \includegraphics[width=2.75in, height=2in]{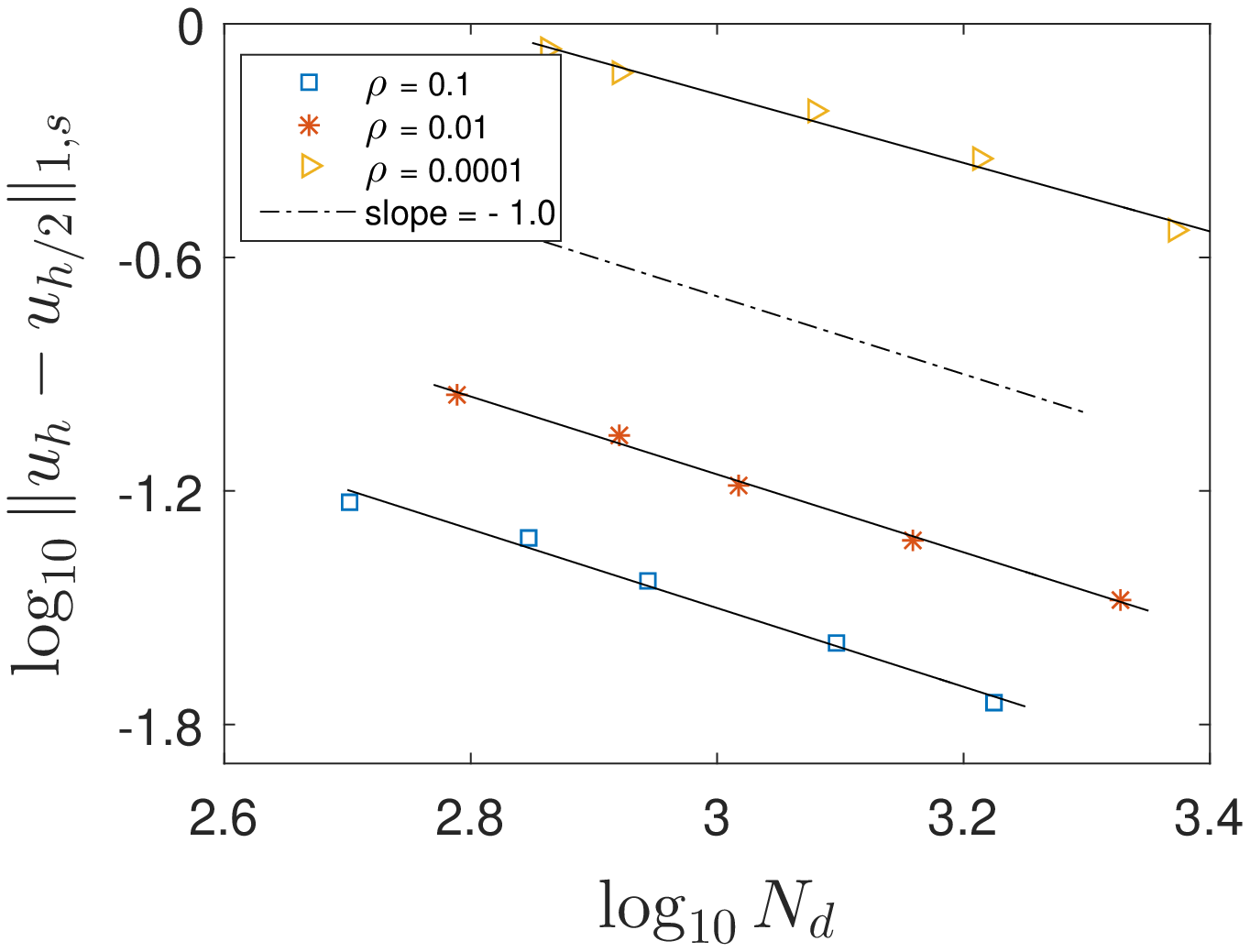}
}
\vspace*{-1mm}
  \caption{Convergence behavior of the energy and deformation in non-symmetric case.}
\label{Energy error and W1p error nonradial}
\end{figure}
\begin{figure}[htb]
\centering \subfigure[$L^1$ error of $\det \nabla u_h$.]{ \label{determinant L1 error nonradial}
    \includegraphics[width=2.7in, height=2in]{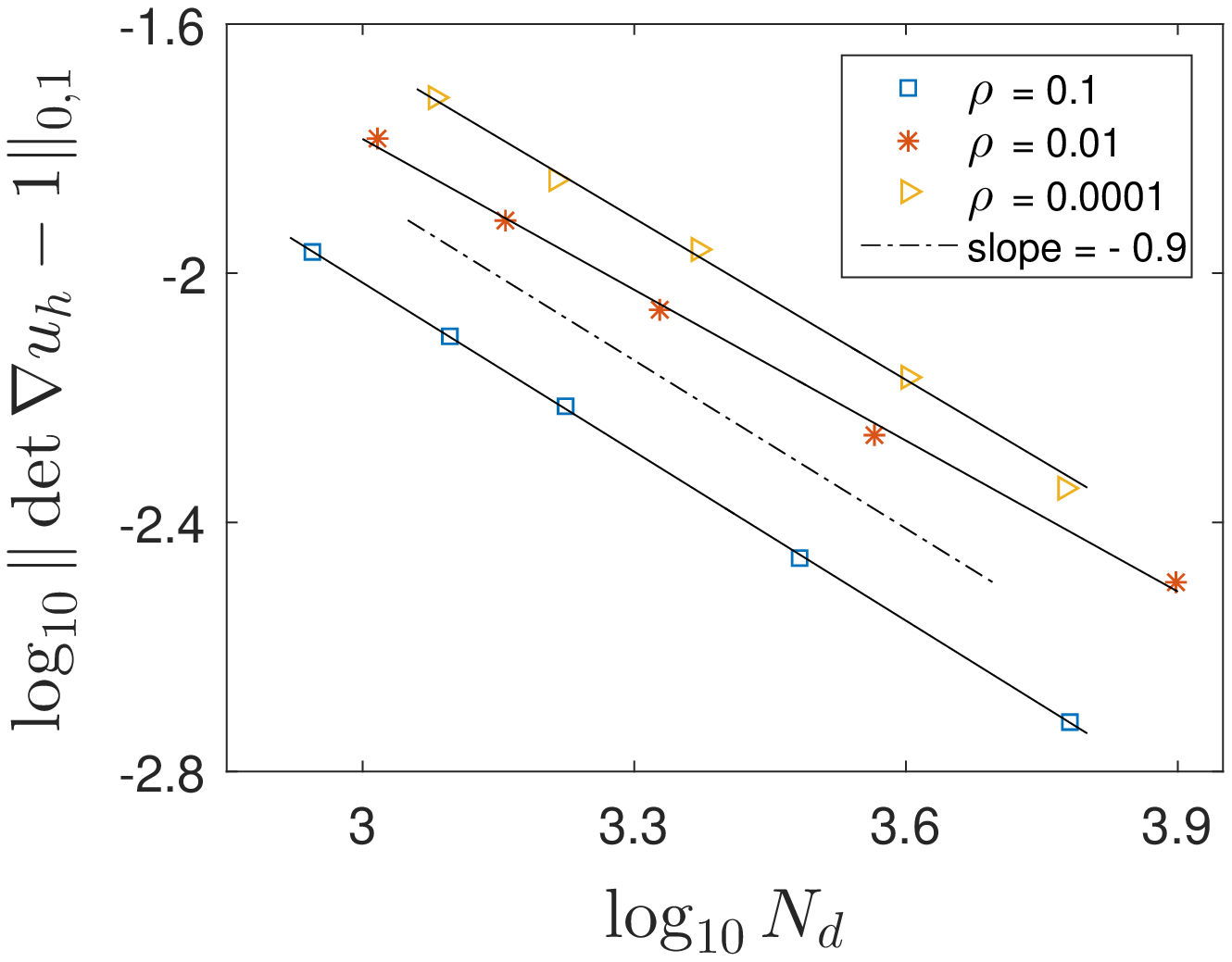}
} \hspace{1mm} \centering \subfigure[$L^2$ error of $\det \nabla u_h$.]{
\label{determinant L2 error nonradial}
    \includegraphics[width=2.7in, height=2in]{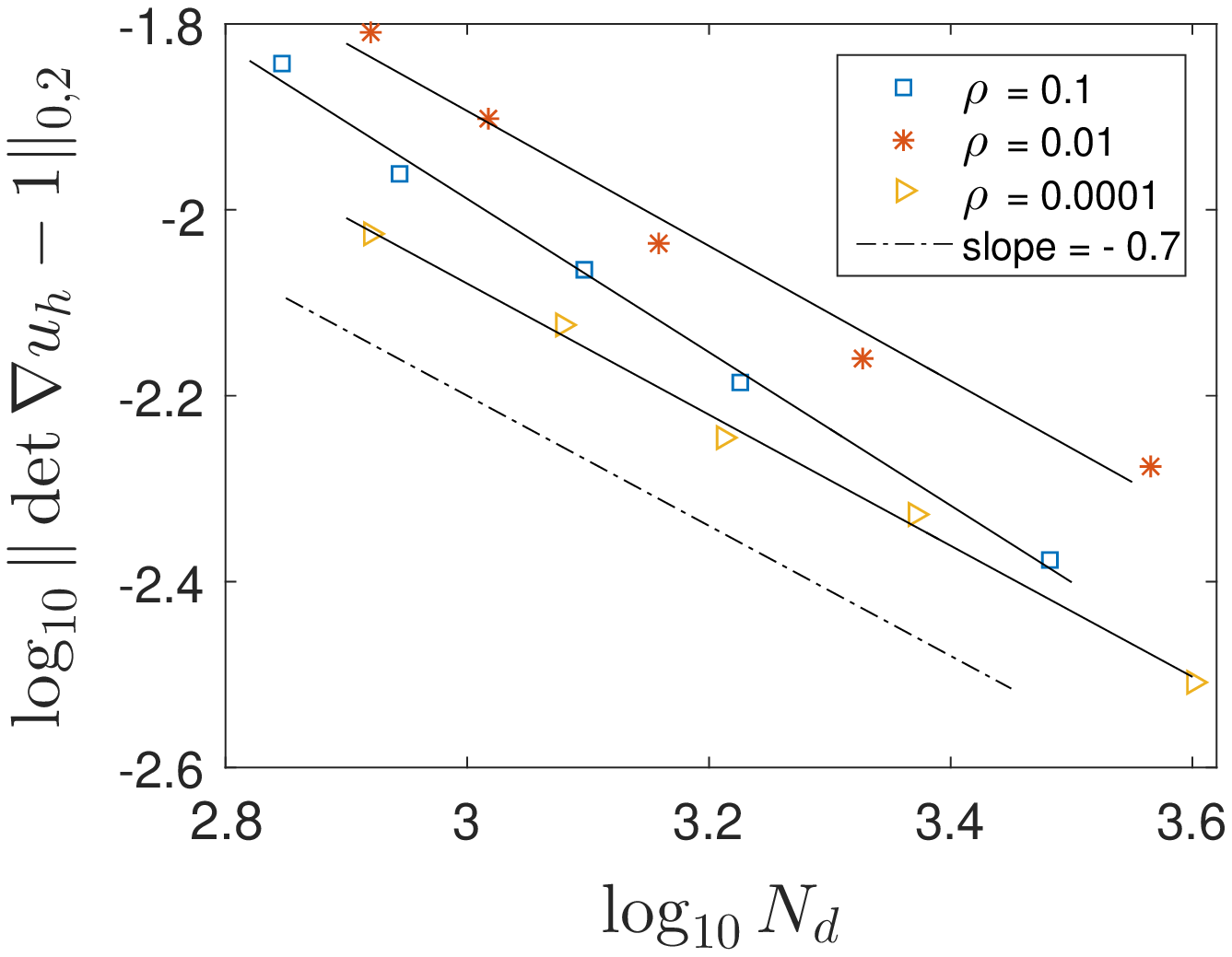}
}
\vspace*{-1mm}
\caption{Convergence behavior of $\det \nabla u_h$ in non-symmetric case.}
\label{convergence det nonradial}
\end{figure}
\begin{figure}[htb!]
  \begin{minipage}[l]{0.495\textwidth}
\centering
\includegraphics[width=2.7in, height=2in]{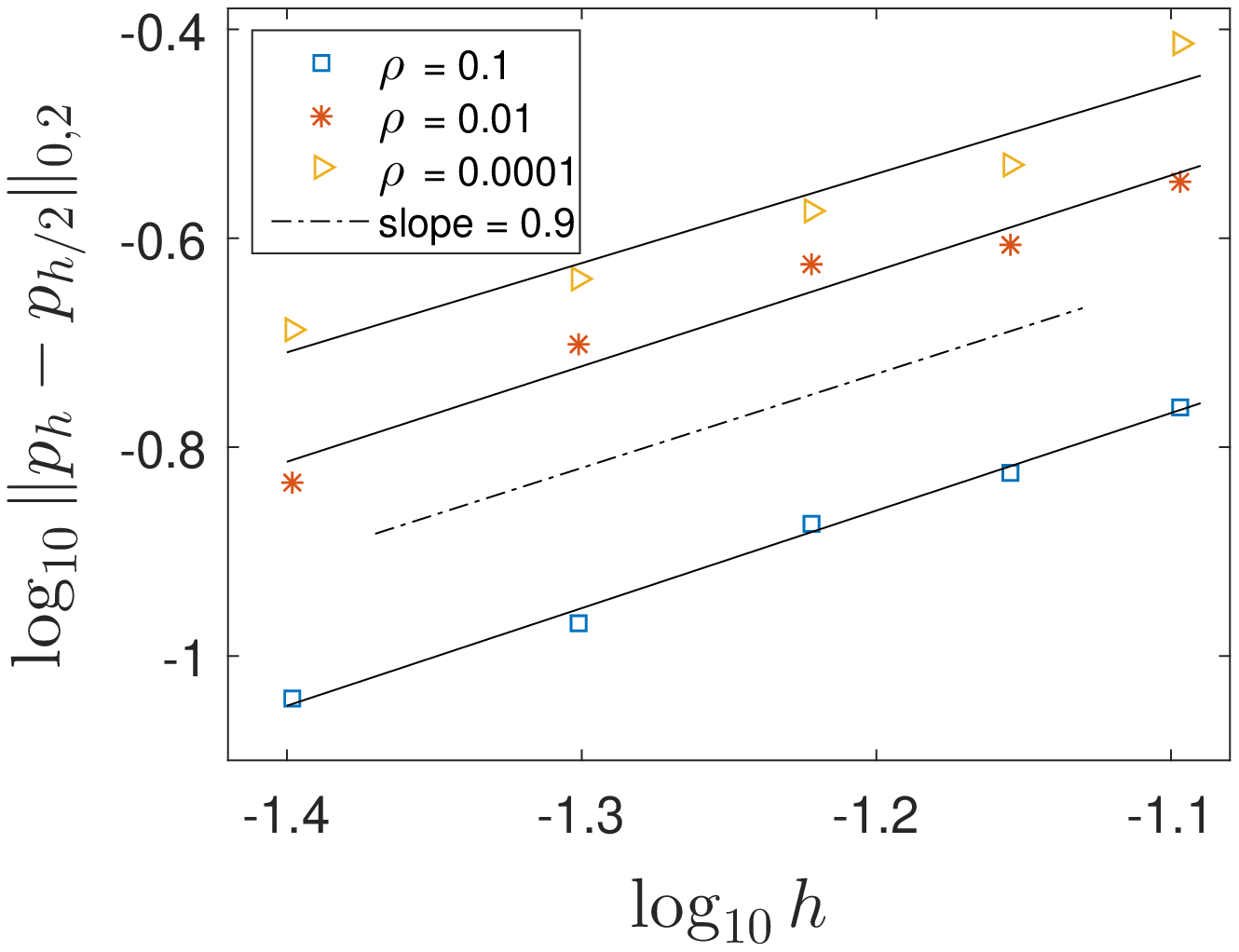}
\vspace*{-3mm}
\caption{Convergence behavior of $p_h$.}
\label{p L2 error nonradial}
\end{minipage}
  \begin{minipage}[l]{0.495\textwidth}
\centering
\includegraphics[width=2.7in, height=2in]{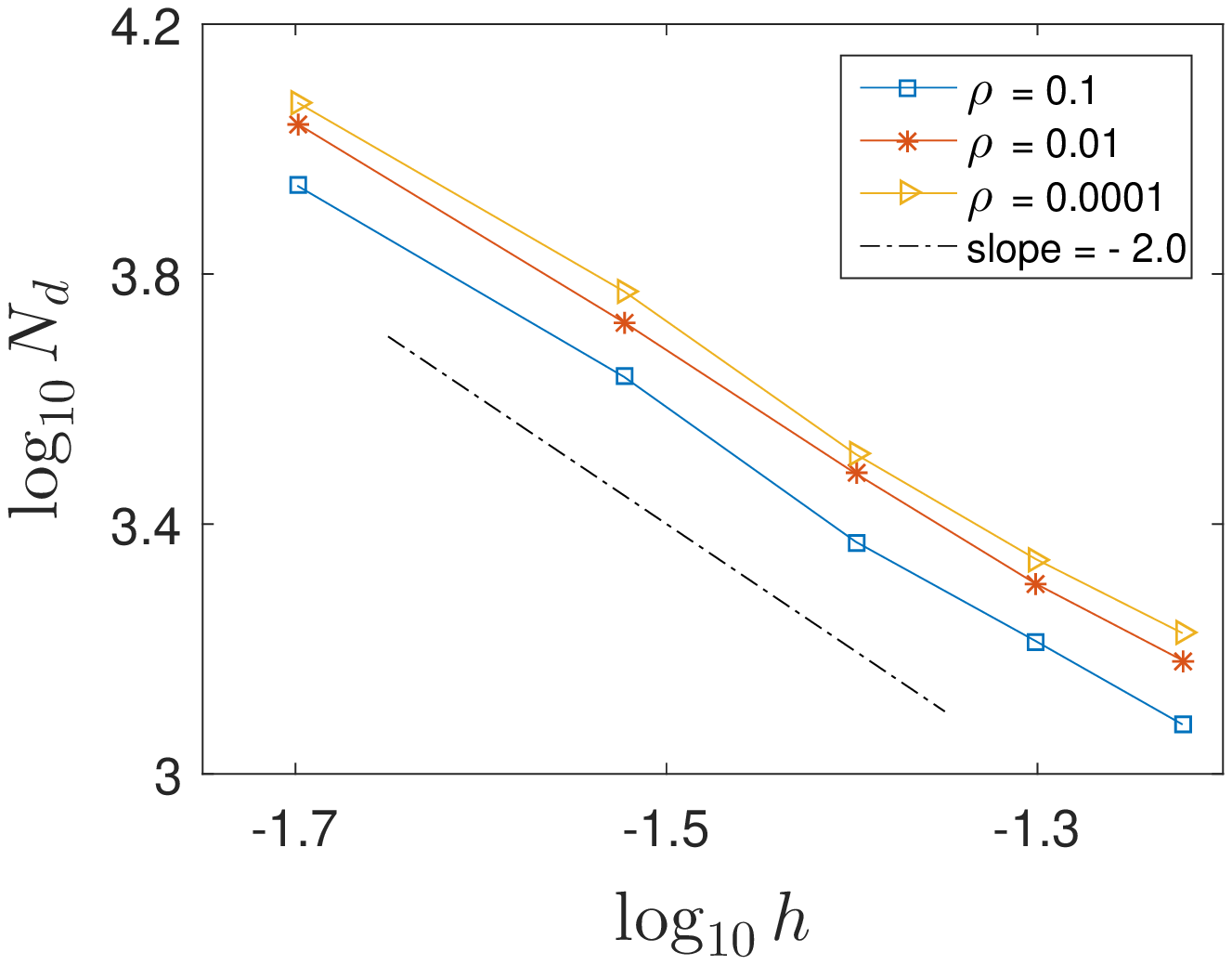}
\vspace*{-3mm}
\caption{$N_d \sim h^{-2}$  in non-symmetric case.}
\label{nonradial sym N_d - h}
\end{minipage}
\end{figure}

The convergence behavior of the numerical cavitation solutions obtained
by the DP-Q2-P1 mixed finite element method is shown in
Fig~\ref{Energy error and W1p error nonradial}-Fig~\ref{p L2 error nonradial}.
Fig~\ref{nonradial sym N_d - h} shows $N_d$ as a function
of $h$ in the non-radially-symmetric case.
We see that, in the non-radially-symmetric case, again $N_d \sim h^{-2}$ for
the meshes produced by the meshing strategy, and the convergence rates obtained
by the DP-Q2-P1 cavitation solutions, though dropped a little bit, are still
close to the optimal rates (see \cite{SuLiRectan}).

\section{Conclusion}

A DP-Q2-P1 mixed finite element method combined with a damped Newton iteration
scheme is established in this paper to numerically solve large deformation problems
in incompressible nonlinear elasticity.
The method is analytically proved to be locking-free and stable.
The numerical experiments on some typical
cavitation problems demonstrate the accuracy and efficiency of the method
in solving incompressible nonlinear elasticity problems with
extremely large anisotropic deformation gradients.

\bibliographystyle{plain}

\setlength{\bibsep}{1ex}

\end{document}